\newcommand{\mf}[1]{\mathfrak{#1}}
\newcommand{\mc}[1]{\mathcal{#1}}
\newcommand{\mr}[1]{\mathrm{#1}}
\newcommand{\B}{\mathcal{B}}
\newcommand{\lr}[2]{\langle #1,#2\rangle}
\newcommand{\w}[1]{\widetilde{#1}}
\newcommand{\up}{\upharpoonright}
\newcommand{\ze}{\mathbf{0}} 
\newcommand{\un}{\mathbf{1}} 
\newcommand{\N}{\mathbb{N}}
\newcommand{\K}{\mathbb{K}}
\newcommand{\R}{\mathbb{R}}
\newcommand{\C}{\mathbb{C}}
\newcommand{\Z}{\mathbb{Z}}
\newtheorem{theorem}{Theorem}
\newtheorem{definition}[theorem]{Definition}
\newtheorem{corollary}[theorem]{Corollary}
\newtheorem{notation}[theorem]{Notation}
\theoremstyle{definition}
\newtheorem{casi notevoli}[theorem]{Casi notevoli}
\newtheorem{introduction}
[theorem]{Introduction}
\newtheorem{remark}
[theorem]
{Remark}
\begin{document}
\title[Stokes Equalities for Forms Functions of an Unbounded Spectral Operator]
{Stokes-type Integral Equalities for Scalarly Essentially Integrable Locally Convex Vector Valued Forms
which are Functions of an Unbounded Spectral Operator}
\author{Benedetto Silvestri}
\date{\today}
\keywords{unbounded spectral operators in Banach spaces, functional calculus,
integration of locally convex vector valued forms on manifolds,
Stokes equalities}
\subjclass[2010]{46G10, 47B40, 47A60, 58C35}
%%%%%%%%%%%%%%%%%%%%%%%%%%%%%%%%%%%
%%%%%%%%%%%%%%%%%%%%%%%%%%%%%%%%%%%
%%%%%%%%%%%%%%%%%%%%%%%%%%%%%%%%%%%
\begin{abstract}
In this work we establish a Stokes-type integral equality for scalarly essentially integrable forms on an orientable
smooth manifold with values in the locally convex linear space $\langle B(G),\sigma(B(G),\mathcal{N})\rangle$,
where $G$ is a complex Banach space and $\mathcal{N}$ is a suitable linear subspace of the norm dual of $B(G)$. 
This result widely extends the Newton-Leibnitz-type equality stated in one of our previous articles.
To obtain our equality we generalize the main result of that article, and employ the Stokes theorem for smooth locally
convex vector valued forms established in a prodromic paper.
Two facts are remarkable. Firstly the forms integrated involved in the equality are functions of a possibly unbounded
scalar type spectral operator in $G$. Secondly these forms need not be smooth nor even continuously differentiable. 
\end{abstract}
\maketitle
%%%%%%%%%%%%%%%%%%%%%%%%%%%%%%%%%%%
%%%%%%%%%%%%%%%%%%%%%%%%%%%%%%%%%%%
%%%%%%%%%%%%%%%%%%%%%%%%%%%%%%%%%%%
\begin{introduction}
In this work we establish in Thm. \ref{09071617}
a Stokes-type integral equality for scalarly essentially integrable $\lr{B(G)}{\sigma(B(G),\mc{N})}$-valued forms
on an orientable smooth manifold, where $G$ is a complex Banach space. 
This result widely extends the Newton-Leibnitz-type equality established in \cite[Cor. 2.33]{sil0}.
To obtain the equality we employ the Extension Thm. \ref{18051958ta} a generalization of \cite[Thm. 2.25]{sil0}
along with the Stokes theorem for smooth locally convex vector valued forms \cite[Thm. 2.54]{sil1}.
Two facts are remarkable. Firstly these forms are functions of a possibly unbounded scalar type spectral operator in $G$.
Secondly these forms need not be smooth nor even continuously differentiable. 
\end{introduction}
%%%%%%%%%%%%%%%%%%%%%%%%%%%%%%%%%%%
%%%%%%%%%%%%%%%%%%%%%%%%%%%%%%%%%%%
%%%%%%%%%%%%%%%%%%%%%%%%%%%%%%%%%%%
\begin{notation}
In the present work we employ the notation of \cite{sil0} and these of \cite{sil1}, with the following two
remarks. 
First what in \cite{sil0} is called ``Radon measure'' and meant measure in the sense of Bourbaki
\cite[Ch. $III$, $\S1$, $n^{\circ}3$, Def. 2]{IntBourb}, here accordingly will be called simply ``measure''.
Second if $Z$ is a $\K$-locally convex vector space with $\K\in\{\R,\C\}$, then
we let $Z^{\prime}=\mc{L}(Z,\K)$ denote the topological dual of $Z$.
\par
If $G$ is a $\C$-Banach space, then let $\mr{ClO}(G)$ denote the set of closed operators in $G$.
If $X$ is a locally compact space and $\mu$ is a measure on $X$, then a map $f:X\to\C$
is scalarly essentially $\mu$-integrable or simply essentially $\mu$-integrable iff 
$\mf{R}\circ\imath_{\C}^{\C_{\R}}\circ f$ and $\mf{I}\circ\imath_{\C}^{\C_{\R}}\circ f$
are essentially $\mu$-integrable, where $\mf{R},\mf{I}\in\mc{L}(\C_{\R},\R)$ are the real and imaginary part
respectively.
\par
We recall from \cite[pg. 39-40]{sil0} that if $\lr{Z}{\tau}$ is a Hausdorff locally convex space over
$\K\in\{\R,\C\}$, then by definition $f:X\to\lr{Z}{\tau}$
is scalarly essentially $(\mu,Z)$-integrable, or $f:X\to Z$ is scalarly essentially $(\mu,Z)$-integrable with respect
to the topology $\tau$, iff $\uppsi\circ f$ is essentially $\mu$-integrable for every
$\uppsi\in\lr{Z}{\tau}^{\prime}$ and the weak integral of $f$ belongs to $Z$, namely there exists a necessarily unique
element $s\in Z$ such that $\uppsi(s)=\int(\uppsi\circ f)\,d\mu$ for every $\uppsi\in\lr{Z}{\tau}^{\prime}$.
In such a case we shall define $\int f\,d\mu\coloneqq s$.
\par
Let $N\in\Z_{+}^{\ast}$, define $\mr{P}^{[N]}:\R^{N}\to\R^{N-1}$, $x\mapsto x\up[1,N-1]\cap\Z$ if $N>1$; 
$x\mapsto 0$ if $N=1$.
Let $M$ be a nonzero dimensional manifold with boundary and let $(U,\phi)$ be a boundary chart of $M$, define 
$\phi^{\partial M}\coloneqq(\mr{P}^{[\mr{dim}\,M]}\circ\imath_{\phi(U)}^{\R^{\mr{dim}\,M}}\circ
\phi\circ\imath_{U\cap\partial M}^{U})_{\natural}$,
where $f_{\natural}=f\up^{\mr{Range}(f)}$ for any map $f$.
Let $\mc{U}$ be a collection of charts of $M$,
and let $\mc{U}_{\partial}$ be the subcollection of those elements in $\mc{U}$ that are boundary charts,
define $\mc{U}^{\partial}\coloneqq\{(U\cap\partial M,\phi^{\partial M})\,\vert\,(U,\phi)\in\mc{U}_{\partial}\}$.
If $\mc{U}$ is an atlas of $M$, then $\mc{U}^{\partial}$ is an atlas of $\partial M$,
moreover if $M$ is oriented and $\mc{U}$ is oriented, then $\mc{U}^{\partial}$ is oriented
and $(U\cap\partial M,\phi^{\partial M})$ is $\gamma$-oriented iff
$(U,\phi)\in\mc{U}$ is $\gamma$-oriented, with $\gamma\in\{1,-1\}$.
\par
We fix the following data.
A $\C$-Banach space $G$; a possibly \textbf{unbounded} scalar type spectral operator $R$ in $G$,
let $\sigma(R)$ be its spectrum and let $E$ be its resolution of identity;
an $E-$appropriate set $\mc{N}$ \cite[Def.2.11]{sil0};
a scalar type spectral operator $T\in B(G)$ and let $\sigma(T)$ denote its spectrum;
locally compact spaces $X,Y$ and measures $\mu$ and $\nu$ on $X$ and $Y$ respectively;
a finite dimensional smooth manifold $M$, with or without boundary, such that $N\coloneqq\mr{dim}\,M\neq 0$.
\end{notation}
%%%%%%%%%%%%%%%%%%%%%%%%%%%%%%%%%%%
%%%%%%%%%%%%%%%%%%%%%%%%%%%%%%%%%%%
%%%%%%%%%%%%%%%%%%%%%%%%%%%%%%%%%%%
\begin{theorem}
\label{18051958taPRE}
Let $\{\sigma_{n}\}_{n\in\N}$ be an $E-$sequence,
let the maps $X\ni x\mapsto f_{x}\in \mr{Bor}(\sigma(R))$ and $Y\ni y\mapsto u_{y}\in \mr{Bor}(\sigma(R))$
be such that $\widetilde{f_{x}}\in\mathfrak{L}_{E}^{\infty}(\sigma(R))$, $\mu-l.a.e.(X)$
and $\widetilde{u_{y}}\in\mathfrak{L}_{E}^{\infty}(\sigma(R))$, $\nu-l.a.e.(Y)$.
Let $X\ni x\mapsto f_{x}(R)\in\lr{B(G)}{\sigma(B(G),\mc{N})}$ and $Y\ni y\mapsto u_{y}(R)\in\lr{B(G)}{\sigma(B(G),\mc{N})}$
be scalarly essentially $(\mu,B(G))-$integrable and $(\nu,B(G))-$integrable respectively, while let
$g,h\in\mr{Bor}(\sigma(R))$. If for all $n\in\N$, 
\begin{equation}
\label{19181001ta}
g(R_{\sigma_{n}}\up G_{\sigma_{n}})
\int\,f_{x}(R_{\sigma_{n}}\up G_{\sigma_{n}})\,d\,\mu(x)
\subseteq
h(R_{\sigma_{n}}\up G_{\sigma_{n}})
\int\,u_{y}(R_{\sigma_{n}}\up G_{\sigma_{n}})\,d\,\nu(y),
\end{equation}
then 
\begin{equation}
\label{16241402ta}
g(R)\int\,f_{x}(R)\,d\,\mu(x)\up\Theta
=
h(R)\int\,u_{y}(R)\,d\,\nu(y)\up\Theta.
\end{equation}
In 
\eqref{19181001ta} the weak-integrals are with respect to the measures $\mu$ and $\nu$
and with respect to the $\sigma(B(G_{\sigma_{n}}),\mc{N}_{\sigma_{n}})$-topology, while in \eqref{16241402ta}
\begin{equation*}
\Theta
\doteqdot
\mr{Dom}\left(g(R)\int\,f_{x}(R)\,d\,\mu(x)\right)
\cap
\mr{Dom}\left(h(R)\int\,u_{y}(R)\,d\,\nu(y)\right),
\end{equation*}
and the weak-integrals are with respect to the measures $\mu$ and $\nu$ and with respect to the
$\sigma(B(G),\mc{N})$-topology.
\end{theorem}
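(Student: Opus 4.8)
The plan is to reduce the asserted equality on $\Theta$ to the family of finite-level inclusions \eqref{19181001ta} by conjugating everything with the spectral projections $E(\sigma_{n})$ and then letting $n\to\infty$. Write $F\coloneqq\int f_{x}(R)\,d\mu(x)$ and $U\coloneqq\int u_{y}(R)\,d\nu(y)$ for the two weak integrals in $B(G)$, whose existence in $B(G)$ is secured by the scalar integrability hypotheses together with $\w{f_{x}},\w{u_{y}}\in\mf{L}_{E}^{\infty}(\sigma(R))$ almost everywhere; and write $F_{n}\coloneqq\int f_{x}(R_{\sigma_{n}}\up G_{\sigma_{n}})\,d\mu(x)$, $U_{n}\coloneqq\int u_{y}(R_{\sigma_{n}}\up G_{\sigma_{n}})\,d\nu(y)$ for the corresponding weak integrals in $B(G_{\sigma_{n}})$.

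First I would establish the compatibility of these four weak integrals with the resolution of identity, namely that $F$ and $U$ commute with every $E(\sigma_{n})$, so that $G_{\sigma_{n}}$ is invariant and $F\up G_{\sigma_{n}}=F_{n}$, $U\up G_{\sigma_{n}}=U_{n}$. Since the weak integral is defined only through the dual pairing with $\mc{N}$, this step cannot be argued pointwise in $x$ and instead exploits that $\mc{N}$ is $E$-appropriate: the map $B(G)\to B(G_{\sigma_{n}})$, $A\mapsto(E(\sigma_{n})AE(\sigma_{n}))\up G_{\sigma_{n}}$, is continuous from $\sigma(B(G),\mc{N})$ to $\sigma(B(G_{\sigma_{n}}),\mc{N}_{\sigma_{n}})$, hence transports the weak integral of $x\mapsto f_{x}(R)$ to the weak integral of its image; as $f_{x}(R)$ commutes with $E(\sigma_{n})$ and restricts on $G_{\sigma_{n}}$ to $f_{x}(R_{\sigma_{n}}\up G_{\sigma_{n}})$ for each $x$, the transported integral is precisely $F_{n}$, and symmetrically for $U_{n}$. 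I expect this interchange of the projection with the weak integral to be the main obstacle, since it is the only point where the dual-pairing definition of the integral must be invoked directly rather than handled term by term. I would also record the standard spectral facts about the Borel functional calculus of $R$: each of $g(R)$, $h(R)$ commutes with $E(\sigma_{n})$, each maps its domain into itself under $E(\sigma_{n})$, and for $w\in G_{\sigma_{n}}$ one has $w\in\mr{Dom}(g(R))$ if and only if $w\in\mr{Dom}(g(R_{\sigma_{n}}\up G_{\sigma_{n}}))$, in which case the two operators agree on $w$, and likewise for $h$.

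Now fix $v\in\Theta$. Combining the two previous steps I would show, for every $n$,
\[
E(\sigma_{n})\,g(R)Fv \;=\; g(R_{\sigma_{n}}\up G_{\sigma_{n}})\,F_{n}\,E(\sigma_{n})v,
\qquad
E(\sigma_{n})\,h(R)Uv \;=\; h(R_{\sigma_{n}}\up G_{\sigma_{n}})\,U_{n}\,E(\sigma_{n})v .
\]
Indeed $E(\sigma_{n})Fv=FE(\sigma_{n})v=F_{n}(E(\sigma_{n})v)$ lies in $G_{\sigma_{n}}$, and it lies in $\mr{Dom}(g(R))$ because $Fv\in\mr{Dom}(g(R))$ (as $v\in\Theta$) and this domain is $E(\sigma_{n})$-invariant; hence $E(\sigma_{n})v\in\mr{Dom}\bigl(g(R_{\sigma_{n}}\up G_{\sigma_{n}})F_{n}\bigr)$ and the first identity follows from the commutation of $g(R)$ with $E(\sigma_{n})$ and the domain-localization fact, with the analogous computation on the $h$-side. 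In particular $E(\sigma_{n})v$ belongs to the domain of the left-hand operator of \eqref{19181001ta}, so the inclusion hypothesis forces
\[
g(R_{\sigma_{n}}\up G_{\sigma_{n}})\,F_{n}\,E(\sigma_{n})v \;=\; h(R_{\sigma_{n}}\up G_{\sigma_{n}})\,U_{n}\,E(\sigma_{n})v .
\]
Chaining the three displayed equalities yields $E(\sigma_{n})\,g(R)Fv=E(\sigma_{n})\,h(R)Uv$ for every $n$. Finally, since $\{\sigma_{n}\}_{n\in\N}$ is an $E$-sequence the projections $E(\sigma_{n})$ converge strongly to the identity, so letting $n\to\infty$ gives $g(R)Fv=h(R)Uv$. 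As $v\in\Theta$ was arbitrary, this is exactly \eqref{16241402ta}, completing the argument.
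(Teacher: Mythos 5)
Your proposal is correct and follows essentially the same route as the paper: conjugate with the spectral projections $E(\sigma_{n})$, use their commutation with $g(R)$, $h(R)$ and with the weak integrals to reduce to the localized operators on $G_{\sigma_{n}}$, invoke the inclusion \eqref{19181001ta} at each level, and pass to the limit via the strong convergence $E(\sigma_{n})\to\un$. The only difference is expository — you take the limit at the end of the finite-level identities while the paper begins with $g(R)Fz=\lim_{n}E(\sigma_{n})g(R)Fz$ — and the interchange of projection and weak integral that you flag as the main obstacle is exactly what the paper delegates to its cited facts \cite[(2.25), (2.31)]{sil0}.
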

\begin{proof}
\eqref{19181001ta} is meaningful by \cite[Thm. 2.22]{sil0}. By \cite[(1.18)]{sil0}, for all $z\in\Theta$
\begin{alignat}{2}
\label{19051223ta}
g(R)\int f_{x}(R)\,d\,\mu(x)\,z
&=
\lim_{n\in\N}E(\sigma_{n})g(R)\int f_{x}(R)\,d\,\mu(x)\,z
\notag
\\
\intertext{by \cite[Thm. 18.2.11(g)]{ds} and \cite[(2.25)]{sil0}}
&=
\lim_{n\in\N}g(R)\int f_{x}(R)\,d\,\mu(x)\,E(\sigma_{n})z
\notag
\\
\intertext{by\cite[(2.31)]{sil0} and \cite[Lemma 1.7]{sil0} applied to $g(R)$}
&=
\lim_{n\in\N}
g(R_{\sigma_{n}}\up G_{\sigma_{n}})
\int 
f_{x}(R_{\sigma_{n}}\up G_{\sigma_{n}})
\,d\,\mu(x)
\,
E(\sigma_{n})
z
\notag
\\
\intertext{by hypothesis \eqref{19181001ta}}
&=
\lim_{n\in\N}
h(R_{\sigma_{n}}\up G_{\sigma_{n}})
\int\,u_{y}(R_{\sigma_{n}}\up G_{\sigma_{n}})\,d\,\nu(y)
\,
E(\sigma_{n})
z
\notag
\\
\intertext{by what above proven and by replacing $g$ with $h$, $f$ with $u$ and $\mu$ with $\nu$}
&=
h(R)\int\,u_{y}(R)\,d\,\nu(y)\,z.
\end{alignat}
\end{proof}
%%%%%%%%%%%%%%%%%%%%%%%%%%%%%%%%%%%
%%%%%%%%%%%%%%%%%%%%%%%%%%%%%%%%%%%
%%%%%%%%%%%%%%%%%%%%%%%%%%%%%%%%%%%
\begin{theorem}
[\textbf{$\sigma(B(G),\mc{N})-$Extension Theorem}]
\label{18051958ta}
Let $X\ni x\mapsto f_{x}\in\mr{Bor}(\sigma(R))$ be such that
$\widetilde{f_{x}}\in\mf{L}_{E}^{\infty}(\sigma(R))$, $\mu-l.a.e.(X)$
and $X\ni x\mapsto f_{x}(R)\in\lr{B(G)}{\sigma(B(G),\mc{N})}$ be scalarly essentially $(\mu,B(G))-$integrable.
Moreover let $Y\ni y\mapsto u_{y}\in\mr{Bor}(\sigma(R))$ be such that
$\widetilde{u_{y}}\in\mf{L}_{E}^{\infty}(\sigma(R))$, $\nu-l.a.e.(Y)$ and
$Y\ni y\mapsto u_{y}(R)\in\lr{B(G)}{\sigma(B(G),\mc{N})}$ be scalarly essentially $(\nu,B(G))-$integrable.
Finally let $g,h\in\mr{Bor}(\sigma(R))$ and assume that
\footnote{For instance but not necessarily when $\w{h}\in\mf{L}_{E}^{\infty}(\sigma(R))$ since in such a case
\cite[Thm. 18.2.11]{ds} implies $h(R)\in B(G)$}
\begin{equation}
\label{09042001}
h(R)\int\,u_{y}(R)\,d\,\nu(y)\in B(G).
\end{equation}
If $\{\sigma_{n}\}_{n\in\N}$ is an $E-$sequence and for all $n\in\N$
\begin{equation}
\label{19211001ta}
g(R_{\sigma_{n}}\up G_{\sigma_{n}})
\int\,f_{x}(R_{\sigma_{n}}\up G_{\sigma_{n}})\,d\,\mu(x)
\subseteq
h(R_{\sigma_{n}}\up G_{\sigma_{n}})
\int\,u_{y}(R_{\sigma_{n}}\up G_{\sigma_{n}})\,d\,\nu(y),
\end{equation}
then
\begin{equation}
\label{16011401ta}
g(R)\int\,f_{x}(R)\,d\,\mu(x)
=
h(R)\int\,u_{y}(R)\,d\,\nu(y).
\end{equation}
In \eqref{19211001ta} the weak-integral are with respect to the measures $\mu$ and $\nu$
and with respect to the $\sigma(B(G_{\sigma_{n}}),\mc{N}_{\sigma_{n}})$-topology,
while in \eqref{16011401ta} the weak-integral is with respect to the measures $\mu$ and $\nu$
and with respect to the $\sigma(B(G),\mc{N})$-topology.
\end{theorem}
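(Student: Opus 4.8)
The plan is to deduce \eqref{16011401ta} from Theorem \ref{18051958taPRE}, the only new ingredient being the boundedness hypothesis \eqref{09042001}. Abbreviate $S\coloneqq\int f_{x}(R)\,d\,\mu(x)$, which lies in $B(G)$ by scalar essential integrability, so that the left-hand side of \eqref{16011401ta} is $g(R)S$, and write $V\coloneqq h(R)\int u_{y}(R)\,d\,\nu(y)$ for the right-hand side, which by \eqref{09042001} lies in $B(G)$ and hence has $\mr{Dom}(V)=G$. First I would apply Theorem \ref{18051958taPRE}, whose hypotheses coincide with the present ones; its conclusion \eqref{16241402ta} asserts that $g(R)S$ and $V$ agree on $\Theta=\mr{Dom}(g(R)S)\cap\mr{Dom}(V)$. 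Because $\mr{Dom}(V)=G$ we have $\Theta=\mr{Dom}(g(R)S)$, so \eqref{16241402ta} already delivers the inclusion $g(R)S\subseteq V$.

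The entire remaining content is therefore the domain identity $\mr{Dom}(g(R)S)=G$, after which the inclusion upgrades to the desired equality. The hard part is that this identity is \emph{not} formal: a closed operator may be a proper restriction of a bounded one, so $g(R)S\subseteq V$ with $V\in B(G)$ is by itself insufficient, and the spectral structure must intervene. To exploit it I would fix an arbitrary $z\in G$ and re-run, on the $h,u,\nu$ side, the limiting computation of Theorem \ref{18051958taPRE}; since now $z\in\mr{Dom}(V)=G$, that chain yields $Vz=\lim_{n}V_{n}E(\sigma_{n})z$, where $V_{n}$ and $A_{n}$ denote the truncated operators $h(R_{\sigma_{n}}\up G_{\sigma_{n}})\int u_{y}(R_{\sigma_{n}}\up G_{\sigma_{n}})\,d\,\nu(y)$ and $g(R_{\sigma_{n}}\up G_{\sigma_{n}})\int f_{x}(R_{\sigma_{n}}\up G_{\sigma_{n}})\,d\,\mu(x)$ appearing in \eqref{19211001ta}. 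As $E(\sigma_{n})z\in G_{\sigma_{n}}=\mr{Dom}(A_{n})$, on which $R_{\sigma_{n}}\up G_{\sigma_{n}}$ and hence $A_{n}$ is bounded, hypothesis \eqref{19211001ta} gives $V_{n}E(\sigma_{n})z=A_{n}E(\sigma_{n})z$, and \cite[(2.25),(2.31)]{sil0} together with \cite[Lemma 1.7]{sil0} rewrite this, exactly as in Theorem \ref{18051958taPRE}, as $g(R)\,E(\sigma_{n})(Sz)$, using that $S\in B(G)$ commutes with $E(\sigma_{n})$.

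Collecting the above, for every $z\in G$ I would have $g(R)\,E(\sigma_{n})(Sz)\to Vz$ while $E(\sigma_{n})(Sz)\to Sz$ by \cite[(1.18)]{sil0}. The closing move—and the step I expect to carry all the weight—is the single structural fact that $g(R)$, as the Borel functional calculus of the scalar type spectral operator $R$, is \emph{closed} \cite{ds}: closedness forces $Sz\in\mr{Dom}(g(R))$ and $g(R)(Sz)=Vz$. But $Sz\in\mr{Dom}(g(R))$ is exactly $z\in\mr{Dom}(g(R)S)$, so arbitrariness of $z$ gives both $\mr{Dom}(g(R)S)=G$ and $g(R)Sz=Vz$ throughout, i.e.\ \eqref{16011401ta}. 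In summary the obstacle, and the precise role of \eqref{09042001}, is the passage from agreement on $\Theta$ to the full domain: it rests on running the approximation for \emph{all} $z\in G$—which needs $\mr{Dom}(V)=G$—and on the closedness of $g(R)$ to convert convergence of $g(R)E(\sigma_{n})(Sz)$ into the membership $Sz\in\mr{Dom}(g(R))$.
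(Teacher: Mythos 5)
Your first step is exactly the paper's: Theorem \ref{18051958taPRE} together with \eqref{09042001} yields the inclusion $g(R)\int f_{x}(R)\,d\mu(x)\subseteq h(R)\int u_{y}(R)\,d\nu(y)$, and everything reduces to the domain identity $\mr{Dom}\bigl(g(R)\int f_{x}(R)\,d\mu(x)\bigr)=G$. But your argument for that identity has a genuine gap at the assertion $E(\sigma_{n})z\in G_{\sigma_{n}}=\mr{Dom}(A_{n})$, ``on which $R_{\sigma_{n}}\up G_{\sigma_{n}}$ and hence $A_{n}$ is bounded''. The $E$-sequence $\{\sigma_{n}\}$ of the hypothesis is arbitrary (it is not assumed to consist of bounded sets), and even when $\sigma_{n}$ is bounded the operator entering $A_{n}$ is $g(R_{\sigma_{n}}\up G_{\sigma_{n}})$, not $R_{\sigma_{n}}\up G_{\sigma_{n}}$: since $g$ is an arbitrary Borel function (the theorem explicitly allows $g(R)$ to be unbounded), $g(R_{\sigma_{n}}\up G_{\sigma_{n}})$ need not be bounded and $\mr{Dom}(A_{n})$ can be a proper subspace of $G_{\sigma_{n}}$. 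Moreover the hypothesis \eqref{19211001ta} reads $A_{n}\subseteq V_{n}$, so it lets you pass from $A_{n}$ to $V_{n}$ only on $\mr{Dom}(A_{n})$; knowing $E(\sigma_{n})z\in\mr{Dom}(V_{n})$ tells you nothing about $E(\sigma_{n})z\in\mr{Dom}(A_{n})$, i.e.\ about $E(\sigma_{n})\int f_{x}(R)\,d\mu(x)\,z$ lying in $\mr{Dom}(g(R))$. Hence the chain $Vz=\lim_{n}V_{n}E(\sigma_{n})z=\lim_{n}A_{n}E(\sigma_{n})z=\lim_{n}g(R)E(\sigma_{n})(Sz)$ breaks at its second equality.

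The paper closes exactly this hole by abandoning the given sequence and introducing the auxiliary $E$-sequence $\delta_{n}\coloneqq\overset{-1}{|g|}([0,n])$, on which $g$ is bounded, so that $E(\delta_{n})G\subseteq\mr{Dom}(g(R))$ genuinely holds; this gives density of $\mathbf{D}=\mr{Dom}\bigl(g(R)\int f_{x}(R)\,d\mu(x)\bigr)$, and then $\mathbf{D}=G$ follows because that operator is closed (\cite[Lemma 1.15]{sil0}) and, by \eqref{09042001} and the inclusion, bounded on $\mathbf{D}$, which forces $\mathbf{D}$ to be closed (\cite[Lemma 1.16]{sil0}). Your closing idea --- exploit closedness of $g(R)$ along a sequence converging to $Sz$ --- is sound and would in fact work if run along $\delta_{n}$ using the already established inclusion $g(R)S\subseteq V$ (so that $g(R)SE(\delta_{n})z=VE(\delta_{n})z\to Vz$ while $SE(\delta_{n})z\to Sz$), rather than along $\sigma_{n}$ via the truncated hypothesis; but as written the domain claim that carries your argument is unjustified.
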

Notice that $g(R)$ and $h(R)$ are possibly \textbf{unbounded} operators in $G$.
\begin{proof}
\eqref{09042001} and \eqref{16241402ta} imply
\begin{equation}
\label{W15051936ta}
g(R)\int\,f_{x}(R)\,d\,\mu(x)\subseteq h(R)\int\,u_{y}(R)\,d\,\nu(y).
\end{equation}
Let us set
\begin{equation}
\label{17502703ta}
(\forall n\in\N)
(\delta_{n}\doteqdot\overset{-1}{|g|}([0,n])).
\end{equation}
We claim that
\begin{equation}
\label{17542703ta}
\begin{cases}
\bigcup_{n\in\N}\delta_{n}=\sigma(R)\\
n\geq m\Rightarrow\delta_{n}\supseteq\delta_{m}\\
(\forall n\in\N)(g(\delta_{n})\text{ is bounded.})
\end{cases}
\end{equation}
Since $|g|\in \mr{Bor}(\sigma(R))$ we have $\delta_{n}\in\B(\C)$ for all $n\in\N$, so $\{\delta_{n}\}_{n\in\N}$ is an
$E-$sequence, hence by \cite[(1.18)]{sil0}
\begin{equation}
\label{10511201ta}
\lim_{n\in\N}E(\delta_{n})=\un;
\end{equation}
with respect to the strong operator topology on $B(G)$. Indeed the first equality follows by
\begin{equation*}
\bigcup_{n\in\N}\delta_{n}\doteq\bigcup_{n\in\N}\overset{-1}{|g|}([0,n])=\overset{-1}{|g|}\left(\bigcup_{n\in\N}[0,n]\right)
=\overset{-1}{|g|}(\R^{+})=\mr{Dom}(g)\doteqdot\sigma(R),
\end{equation*}
the second by the fact that $\overset{-1}{|g|}$ preserves the inclusion, the third by the inclusion
$|g|(\delta_{n})\subseteq [0,n]$. Hence our claim. By the third statement of \eqref{17542703ta},
$\delta_{n}\in\B(\C)$ and \cite[Lemma 1.7(3)]{sil0} we obtain
\begin{equation}
\label{19292703ta}
(\forall n\in\N)(E(\delta_{n})G\subseteq\mr{Dom}(g(R))).
\end{equation}
By \cite[(2.25)]{sil0} and \eqref{19292703ta} for all $n\in\N$
\begin{equation*}
\int\,f_{x}(R)\,d\,\mu(x)E(\delta_{n})G\subseteq E(\delta_{n})G\subseteq\mr{Dom}(g(R)).
\end{equation*}
Therefore
\begin{equation*}
(\forall n\in\N)(\forall v\in G)\left(E(\delta_{n})v\in\mr{Dom}
\left(g(R)\int\,f_{x}(R)\,d\,\mu(x)\right)\right).
\end{equation*}
Hence by \eqref{10511201ta}
\begin{equation}
\label{WII04041215ta}
\mathbf{D}\doteqdot\mr{Dom}\left(g(R)\int\,f_{x}(R)\,d\,\mu(x)\right)\text{ is dense in }G.
\end{equation}
Now
$\int\,f_{x}(R)\,d\,\mu(x)\in B(G)$ and $g(R)$ is closed by \cite[Thm. 18.2.11]{ds}, so by \cite[Lemma 1.15]{sil0}
we find that 
\begin{equation}
\label{WII04041228ta}
g(R)\int\,f_{x}(R)\,d\,\mu(x)\text{ is closed.}
\end{equation}
Next \eqref{09042001} and \eqref{W15051936ta} imply
\begin{equation}
\label{WII04041231ta} 
g(R)\int\,f_{x}(R)\,d\,\mu(x)\in B(\mathbf{D},G).
\end{equation}
Now \eqref{WII04041228ta}, \eqref{WII04041231ta} and \cite[Lemma 1.16]{sil0} imply that $\mathbf{D}$ is closed in $G$,
therefore by \eqref{WII04041215ta}
\begin{equation*}
\mathbf{D}=G;
\end{equation*}
therefore the statement follows by \eqref{W15051936ta}. 
\end{proof}
%%%%%%%%%%%%%%%%%%%%%%%%%%%%%%%%%%%
%%%%%%%%%%%%%%%%%%%%%%%%%%%%%%%%%%%
%%%%%%%%%%%%%%%%%%%%%%%%%%%%%%%%%%%
\begin{definition}
Let $V$ be an open neighbourhood of $\sigma(R)$, $l\in\R_{+}^{\ast}\cup\{+\infty\}$ such that $]-l,l[\cdot V\subseteq V$,
and $F:V\to\C$ be analytic. Moreover let $W$ be a set and $g:W\to\R$ such that $g(W)\subseteq]-l,l[$.
Let $F_{t}:V\ni\lambda\mapsto F(t\lambda)\in\C$ with $t\in]-l,l[$, then define 
the following operator valued map originating by the Borel functional calculus of the operator $R$
\begin{equation*}
\zeta_{F,g}^{R}:W\ni x\mapsto F_{g(x)}(R)\in\mr{ClO}(G).
\end{equation*}
\end{definition}
%%%%%%%%%%%%%%%%%%%%%%%%%%%%%%%%%%%
%%%%%%%%%%%%%%%%%%%%%%%%%%%%%%%%%%%
%%%%%%%%%%%%%%%%%%%%%%%%%%%%%%%%%%%
\begin{corollary}
\label{09051744}
Let $V$ be an open neighbourhood of $\sigma(T)$, $l\in\R_{+}^{\ast}\cup\{+\infty\}$ such that $]-l,l[\cdot V\subseteq V$,
and $F:V\to\C$ be analytic. Moreover let $n,p\in\Z_{+}^{\ast}$, $W$ be an open set of $\R^{n}$, and $g\in\mc{C}^{p}(W,\R)$
such that $g(W)\subseteq]-l,l[$. Thus $\zeta_{F,g}^{T}\in\mc{C}^{p}(W,B(G))$, and for every $i\in[1,n]\cap\Z$ we have 
\begin{equation*}
\frac{\partial\zeta_{F,g}^{T}}{\partial e_{i}}=\frac{\partial g}{\partial e_{i}}\cdot T\zeta_{\frac{dF}{d\lambda},g}^{T}.
\end{equation*}
\end{corollary}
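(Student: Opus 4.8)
The plan is to reduce the several–variable statement to the one–variable auxiliary map
\[
\Phi\colon\;]-l,l[\;\to B(G),\qquad \Phi(t)\doteqdot F_{t}(T)=\int_{\sigma(T)}F(t\lambda)\,dP(\lambda),
\]
where $P$ denotes the resolution of the identity of $T$, and then to invoke the chain rule. First I would record that each value $\zeta_{F,g}^{T}(x)=F_{g(x)}(T)$ really lies in $B(G)$ and not merely in $\mr{ClO}(G)$: since $\sigma(T)$ is compact, $g(x)\in\,]-l,l[$ and $]-l,l[\cdot V\subseteq V$, the function $F_{g(x)}$ is bounded and Borel on $\sigma(T)$, so the scalar type spectral functional calculus of $T$ (\cite[Thm. 18.2.11]{ds}) yields $F_{g(x)}(T)\in B(G)$. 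With this, $\zeta_{F,g}^{T}=\Phi\circ g$, and the whole corollary will follow once $\Phi$ is shown to be smooth with the correct derivative.

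The heart of the argument is to prove that $\Phi\in\mc{C}^{\infty}(\,]-l,l[\,,B(G))$ with
\[
\Phi'(t)=\int_{\sigma(T)}\lambda\,F'(t\lambda)\,dP(\lambda),\qquad F'\doteqdot\frac{dF}{d\lambda}.
\]
The essential tool here is the uniform bound $\sup\{\|P(\delta)\|:\delta\in\B(\C)\}<\infty$ carried by the resolution of the identity of a scalar type spectral operator, which upgrades a scalar estimate into the operator–norm estimate $\bigl\|\int\phi\,dP\bigr\|\le C\sup_{\sigma(T)}|\phi|$ for bounded Borel $\phi$ (\cite[Thm. 18.2.11]{ds}). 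Applying this to $\lambda\mapsto\frac{F((t+s)\lambda)-F(t\lambda)}{s}-\lambda F'(t\lambda)$, I would bound the operator norm of the difference quotient minus the candidate derivative by the scalar supremum of this function over $\sigma(T)$; analyticity of $F$ together with compactness of $\sigma(T)$ and the fact that $t,t+s$ stay in a compact subinterval of $]-l,l[$ (so that $t\lambda$ ranges over a compact subset of $V$ on which $F$ and all its derivatives are bounded) makes this supremum tend to $0$ as $s\to0$. Iterating gives $\Phi^{(k)}(t)=\int_{\sigma(T)}\lambda^{k}F^{(k)}(t\lambda)\,dP(\lambda)$, each a bounded operator depending continuously on $t$, whence $\Phi\in\mc{C}^{\infty}$.

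Next I would identify $\Phi'(t)$ with the operator occurring in the statement. By multiplicativity of the functional calculus of $T$ (\cite[Thm. 18.2.11]{ds}) and $T=\int\lambda\,dP(\lambda)$,
\[
\Phi'(t)=\int_{\sigma(T)}\lambda\,F'(t\lambda)\,dP(\lambda)=T\int_{\sigma(T)}F'(t\lambda)\,dP(\lambda)=T\,(F')_{t}(T),
\]
so that $\Phi'(g(x))=T\,\zeta_{F',g}^{T}(x)=T\,\zeta_{\frac{dF}{d\lambda},g}^{T}(x)$. Finally, since $\Phi$ is $\mc{C}^{\infty}$ and $g\in\mc{C}^{p}(W,\R)$, the chain rule for Fréchet differentiable maps between $W\subseteq\R^{n}$ and the Banach space $B(G)$ gives $\zeta_{F,g}^{T}=\Phi\circ g\in\mc{C}^{p}(W,B(G))$ and, for each $i\in[1,n]\cap\Z$,
\[
\frac{\partial\zeta_{F,g}^{T}}{\partial e_{i}}(x)=\Phi'(g(x))\,\frac{\partial g}{\partial e_{i}}(x)=\frac{\partial g}{\partial e_{i}}(x)\cdot T\,\zeta_{\frac{dF}{d\lambda},g}^{T}(x),
\]
as claimed. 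The main obstacle is the norm–differentiability of $\Phi$ in the second paragraph: one must pass the scalar difference–quotient limit through the vector–valued spectral integral, and this step relies crucially on the uniform boundedness of $P$ to convert a scalar uniform estimate over the compact spectrum into an operator–norm estimate.
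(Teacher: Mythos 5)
Your proof is correct and follows essentially the same route as the paper: the paper factors $\zeta_{F,g}^{T}=\Phi\circ g$ and invokes the chain rule together with \cite[Thm. 1.21]{sil0}, which is precisely the statement that $]-l,l[\ni t\mapsto F_{t}(T)\in B(G)$ is smooth with derivative $T(\frac{dF}{d\lambda})_{t}(T)$. The only difference is that you re-derive that cited one-variable ingredient from scratch, via the uniform boundedness of the resolution of the identity and the multiplicativity of the Borel functional calculus, instead of quoting it.
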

\begin{proof}
$]-l,l[\ni t\mapsto F_{t}(T)\in B(G)$ is smooth since \cite[Thm. 1.21]{sil0}, therefore
the first sentence of the statement follows since composition of $\mc{C}^{p}$-maps is a $\mc{C}^{p}$-map,
while the equality follows by the Chain Rule and by \cite[Thm. 1.21]{sil0}.
\end{proof}
%%%%%%%%%%%%%%%%%%%%%%%%%%%%%%%%%%%
%%%%%%%%%%%%%%%%%%%%%%%%%%%%%%%%%%%
%%%%%%%%%%%%%%%%%%%%%%%%%%%%%%%%%%%
\begin{definition}
\label{09070820}
Let $k\in\Z_{+}$, $\omega\in\mr{Alt}_{c}^{k}(M)$ and $(U,\phi:U\to W)$ be a chart of $M$.
Define
\begin{equation*}
\begin{cases}
\omega^{\phi}:M(k,N,<)\to\mc{A}(W);
\\
I\mapsto(\imath_{U}^{M})^{\ast}(\omega)(\partial_{I_{1}}^{\phi},\dots,\partial_{I_{k}}^{\phi})\circ\phi^{-1}.
\end{cases}
\end{equation*}
Moreover let $V$ be an open neighbourhood of $\sigma(R)$ such that $\R\cdot V\subseteq V$, $F:V\to\C$ be analytic and
let $\delta\in\mc{B}(\C)$ be such that\footnote{for instance when $\delta$ is bounded see Rmk. \ref{09071645}}
\begin{equation}
\label{09182010}
\begin{aligned}  
\mr{Range}(\zeta_{F,\omega_{I}^{\phi}}^{R_{\delta}\up G_{\delta}})&\subseteq B(G_{\delta});
\\
\zeta_{F,\omega_{I}^{\phi}}^{R_{\delta}\up G_{\delta}}&\in
\mf{L}_{c}^{1}(W,\lr{B(G_{\delta})}{\sigma(B(G_{\delta}),\mc{N}_{\delta})},\lambda). 
\end{aligned}
\end{equation}
Define 
\begin{equation*}
\mr{f}_{\omega,\phi}^{\delta,F}:M(k,N,<)\ni I\mapsto\mr{f}_{\omega,\phi,I}^{\delta,F}
\coloneqq\zeta_{F,\omega_{I}^{\phi}}^{R_{\delta}\up G_{\delta}}\circ\phi,
\end{equation*}
and then define
$[\omega,\phi,\delta,F]\in\mr{Alt}^{k}(U,M;\lr{B(G_{\delta})}{\sigma(B(G_{\delta}),\mc{N}_{\delta})},\lambda)$ such that
\begin{equation*}
[\omega,\phi,\delta,F]
\coloneqq\sum_{I\in M(k,N,<)}\mr{f}_{\omega,\phi,I}^{\delta,F}\otimes\bigwedge_{s=1}^{k}dx_{I_{s}}^{\phi}.
\end{equation*}
\end{definition}
%%%%%%%%%%%%%%%%%%%%%%%%%%%%%%%%%%%
%%%%%%%%%%%%%%%%%%%%%%%%%%%%%%%%%%%
%%%%%%%%%%%%%%%%%%%%%%%%%%%%%%%%%%%
\begin{remark}
\label{09071645}
Let $k\in\Z_{+}$, $\omega\in\mr{Alt}_{c}^{k}(M)$ and $(U,\phi:U\to W)$ be a chart of $M$.
Let $\sigma\in\mc{B}(\C)$ be \emph{bounded}, thus $R_{\sigma}\up G_{\sigma}\in B(G_{\sigma})$ since \cite[Lemma 1.7]{sil0}.
Hence $\zeta_{F,\omega_{I}^{\phi}}^{R_{\sigma}\up G_{\sigma}}\in\mc{A}_{c}(W,\lr{B(G_{\sigma})}{\|\cdot\|})$
by Cor. \ref{09051744}, so $\mr{f}_{\omega,\phi,I}^{\sigma,F}\in\mc{A}_{c}(U,\lr{B(G_{\sigma})}{\|\cdot\|})$
and then $[\omega,\phi,\sigma,F]$ is smooth w.r.t. the norm topology, namely
$[\omega,\phi,\sigma,F]\in\mr{Alt}^{k}(U,M;\lr{B(G_{\sigma})}{\|\cdot\|})$. 
Finally as a result $\zeta_{F,\omega_{I}^{\phi}}^{R_{\sigma}\up G_{\sigma}}$ is norm continuous and compactly supported,
therefore $\zeta_{F,\omega_{I}^{\phi}}^{R_{\sigma}\up G_{\sigma}}$ is Lebesgue integrable w.r.t. the norm topology and its integral
belongs to $B(G_{\sigma})$.
\end{remark}
%%%%%%%%%%%%%%%%%%%%%%%%%%%%%%%%%%%
%%%%%%%%%%%%%%%%%%%%%%%%%%%%%%%%%%%
%%%%%%%%%%%%%%%%%%%%%%%%%%%%%%%%%%%
\begin{remark}
\label{09071646}
Let $\delta\in\mc{B}(\C)$. The norm topology on $B(G_{\delta})$ is stronger than the topology
$\sigma(B(G_{\delta}),\mc{N}_{\delta})$ since this last is the weakest topology on $B(G_{\delta})$ among those
for which $\mc{N}_{\delta}$ is a set of continuous functionals, and since $\mc{N}_{\delta}\subseteq B(G_{\delta})^{\prime}$.
Thus we can and shall identify $\mc{A}(U,\lr{B(G_{\delta})}{\|\cdot\|})$ as a
$\mc{A}(U)$-submodule of $\mc{A}(U,\lr{B(G_{\delta})}{\sigma(B(G_{\delta}),\mc{N}_{\delta})})$ and
$\mr{Alt}^{k}(U,M;\lr{B(G_{\delta})}{\|\cdot\|})$ as a $\mc{A}(U)$-submodule of
$\mr{Alt}^{k}(U,M;\lr{B(G_{\delta})}{\sigma(B(G_{\delta}),\mc{N}_{\delta})})$.
\end{remark}
%%%%%%%%%%%%%%%%%%%%%%%%%%%%%%%%%%%
%%%%%%%%%%%%%%%%%%%%%%%%%%%%%%%%%%%
%%%%%%%%%%%%%%%%%%%%%%%%%%%%%%%%%%%
\begin{remark}
\label{09080624}
Let $\delta\in\mc{B}(\C)$, then any map defined on $X$ and with values in $B(G_{\delta})$ that is scalarly essentially 
$\mu$-integrable w.r.t. the norm topology it is also scalarly essentially $\mu$-integrable w.r.t. the
$\sigma(B(G_{\delta}),\mc{N}_{\delta})$-topology since $\mc{N}_{\delta}\subseteq B(G_{\delta})^{\prime}$.
\end{remark}
%%%%%%%%%%%%%%%%%%%%%%%%%%%%%%%%%%%
%%%%%%%%%%%%%%%%%%%%%%%%%%%%%%%%%%%
%%%%%%%%%%%%%%%%%%%%%%%%%%%%%%%%%%%
\begin{definition}
\label{09211309}
Let $k\in\Z_{+}$, $\omega\in\mr{Alt}_{c}^{k}(M)$ and $\{(U_{\alpha},\phi_{\alpha})\}_{\alpha\in D}$
be an atlas of $M$. Let $V$ be an open neighbourhood of $\sigma(R)$ such that $\R\cdot V\subseteq V$, $F:V\to\C$
be analytic and $\delta\in\mc{B}(\C)$ be such that \eqref{09182010} holds for $\phi=\phi_{\alpha}$ and for every
$\alpha\in D$. Define
$[\omega,\delta,F]\in\mr{Alt}^{k}(M;\lr{B(G_{\delta})}{\sigma(B(G_{\delta}),\mc{N}_{\delta})},\lambda)$
such that for all $\alpha\in D$
\begin{equation*}
(\imath_{U_{\alpha}}^{M})^{\times}([\omega,\delta,F])=[\omega,\phi_{\alpha},\delta,F].
\end{equation*}
\end{definition}  
%%%%%%%%%%%%%%%%%%%%%%%%%%%%%%%%%%%
%%%%%%%%%%%%%%%%%%%%%%%%%%%%%%%%%%%
%%%%%%%%%%%%%%%%%%%%%%%%%%%%%%%%%%%
\begin{definition}
\label{09211311}
Let $k\in\Z_{+}^{\ast}$, $\omega\in\mr{Alt}^{k-1}(M)$ and $i\in[1,k]\cap\Z$.
Define $\mf{d}_{i}(\omega)\in\mc{A}(M)$ and $\mf{n}_{i}(\omega)\in\mr{Alt}^{k}(M)$ such that
for any given atlas $\mc{U}$ of $M$ we have for every $(U,\phi)\in\mc{U}$
\begin{equation*}
\begin{aligned}
(\imath_{U}^{M})^{\ast}(\mf{d}_{i}(\omega))&\coloneqq
\partial_{i}^{\phi}[(\imath_{U}^{M})^{\ast}(\omega)
(\partial_{1}^{\phi},\dots,\widehat{\partial_{i}^{\phi}},\dots,\partial_{k}^{\phi})],
\\
(\imath_{U}^{M})^{\ast}(\mf{n}_{i}(\omega))&\coloneqq
(\imath_{U}^{M})^{\ast}(\omega)
(\partial_{1}^{\phi},\dots,\widehat{\partial_{i}^{\phi}},\dots,\partial_{k}^{\phi})
\bigwedge_{s=1}^{k}dx_{s}^{\phi};
\end{aligned}
\end{equation*}
where $\widehat{z}$ stands for $z$ missing.
\end{definition}
%%%%%%%%%%%%%%%%%%%%%%%%%%%%%%%%%%%
%%%%%%%%%%%%%%%%%%%%%%%%%%%%%%%%%%%
%%%%%%%%%%%%%%%%%%%%%%%%%%%%%%%%%%%
The above two definitions are well-set since the usual gluing lemma for smooth forms, 
since the extension of the gluing lemma via charts at scalarly essentially integrable locally convex vector valued
maps \cite[Rmk.1.2]{sil1}, and
since the extension of the gluing lemma via charts at smooth locally convex vector valued maps \cite[Notation]{sil1},
where the compatibility in both the definitions is ensured by the following simple fact 
\begin{equation*}
(\imath_{U_{\alpha}}^{M})^{\ast}(\omega)(\partial_{I_{1}}^{\phi_{\alpha}},\dots,\partial_{I_{k}}^{\phi_{\alpha}})
\circ\imath_{U_{\alpha,\beta}}^{U_{\alpha}}
=
(\imath_{U_{\alpha,\beta}}^{M})^{\ast}(\omega)
(\partial_{I_{1}}^{\phi_{\alpha,\beta}},\dots,\partial_{I_{k}}^{\phi_{\alpha,\beta}}),
\end{equation*}
where $U_{\alpha,\beta}=U_{\alpha}\cap U_{\beta}$ and
$\phi_{\alpha,\beta}=(\phi_{\alpha}\circ\imath_{U_{\alpha,\beta}}^{U_{\alpha}})_{\natural}$.
%%%%%%%%%%%%%%%%%%%%%%%%%%%%%%%%%%%
%%%%%%%%%%%%%%%%%%%%%%%%%%%%%%%%%%%
%%%%%%%%%%%%%%%%%%%%%%%%%%%%%%%%%%%
\begin{theorem}
[\textbf{Stokes equality for $\sigma(B(G),\mc{N})$-integrable forms functions of an unbounded operator}]  
\label{09071617}
Let $M$ be oriented with boundary and $\omega\in\mr{Alt}_{c}^{N-1}(M)$.
Let $V$ be an open neighbourhood of $\sigma(R)$ such that $\R\cdot V\subseteq V$ and $F:V\to\C$ be analytic. 
Assume that there exists a finite family $\{(U_{\alpha},\phi_{\alpha})\}_{\alpha\in D}$
of oriented charts of $M$ such that $\{U_{\alpha}\}_{\alpha\in D}$ is a covering of $\mr{supp}(\omega)$ and
\begin{enumerate}
\item
$\widetilde{F}_{t}\in\mf{L}_{E}^{\infty}(\sigma(R))$ for every $t\in\R$, and for all $\alpha\in D$ such that $\phi_{\alpha}$
is a boundary chart, the map
\begin{equation*}
\zeta_{F,\omega_{N}^{\phi_{\alpha}}}^{R}\circ\imath_{\phi_{\alpha}(U_{\alpha}\cap\partial M)}^{\phi_{\alpha}(U_{\alpha})}:
\phi_{\alpha}(U_{\alpha}\cap\partial M)\to\lr{B(G)}{\sigma(B(G),\mc{N})},
\end{equation*}
is scalarly essentially $(\lambda_{\phi_{\alpha}(U_{\alpha}\cap\partial M)},B(G))$-integrable,
\label{09071617h1}
\item
$(\widetilde{\frac{dF}{d\lambda}})_{t}\in\mf{L}_{E}^{\infty}(\sigma(R))$ for every $t\in\R$, and for all $\alpha\in D$ and 
$i\in[1,N]\cap\Z$, the map
\begin{equation*}
\zeta_{\frac{dF}{d\lambda},\omega_{i}^{\phi_{\alpha}}}^{R}:\phi_{\alpha}(U_{\alpha})\to\lr{B(G)}{\sigma(B(G),\mc{N})},
\end{equation*}
is scalarly essentially $(\lambda_{\phi_{\alpha}(U_{\alpha})},B(G))$-integrable;
\label{09071617h2}
\end{enumerate}
where
\begin{equation*}
\omega_{i}^{\phi_{\alpha}}\coloneqq
(\imath_{U_{\alpha}}^{M})^{\ast}(\omega)
(\partial_{1}^{\phi_{\alpha}},\dots,\widehat{\partial_{i}^{\phi_{\alpha}}},\dots,\partial_{N}^{\phi_{\alpha}})\circ\phi_{\alpha}^{-1}.
\end{equation*}
Thus
\begin{equation*}
R\int\sum_{i=1}^{N}(-1)^{i-1}\mf{d}_{i}(\omega)\cdot[\mf{n}_{i}(\omega),\sigma(R),\frac{dF}{d\lambda}]
=
\int(\imath_{\partial M}^{M})^{\times}([\omega,\sigma(R),F]);
\end{equation*}
where the integrals belong to $B(G)$ and are with respect to the $\sigma(B(G),\mc{N})$ topology and in case
$\partial M=\emptyset$, then the integral in the right-hand side has to be understood equal to $\ze$.
\end{theorem}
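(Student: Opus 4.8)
The plan is to reduce the asserted identity to the \textbf{bounded} setting by means of an $E$-sequence, to apply on each bounded piece the Stokes theorem for smooth locally convex vector valued forms \cite[Thm. 2.54]{sil1}, and finally to remove the boundedness by invoking the Extension Thm. \ref{18051958ta}. Accordingly I set $g\coloneqq\mr{id}_{\sigma(R)}$, so that $g(R)=R$, and I take $h$ to be the constant function $1$, so that $h(R)=\un$; with these choices the target identity is precisely \eqref{16011401ta}, provided the two integrals over $M$ and $\partial M$ are first recast as parameter integrals in the sense of Thm. \ref{18051958ta}. To effect this I use the definition of the integral of a locally convex vector valued form through the oriented atlas $\{(U_{\alpha},\phi_{\alpha})\}_{\alpha\in D}$ and a subordinate partition of unity: the bulk integral becomes a finite sum of Lebesgue integrals over the chart images $\phi_{\alpha}(U_{\alpha})$ and the boundary integral a finite sum over the $\phi_{\alpha}(U_{\alpha}\cap\partial M)$. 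Taking $X$ and $Y$ to be the corresponding disjoint unions with Lebesgue measures $\mu,\nu$, the integrands define maps $x\mapsto f_{x}(R)$ and $y\mapsto u_{y}(R)$ built respectively from $\zeta_{\frac{dF}{d\lambda},\omega_{i}^{\phi_{\alpha}}}^{R}$ and $\zeta_{F,\omega_{N}^{\phi_{\alpha}}}^{R}$; their scalar essential $(\mu,B(G))$- and $(\nu,B(G))$-integrability is exactly the content of hypotheses \ref{09071617h2} and \ref{09071617h1}, the passage from the norm topology to $\sigma(B(G),\mc{N})$ on each bounded constituent being supplied by Rmk. \ref{09080624}.

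Next I fix the $E$-sequence $\delta_{n}\coloneqq\overset{-1}{|g|}([0,n])=\{\lambda\in\sigma(R)\,\vert\,|\lambda|\le n\}$, exactly as in \eqref{17502703ta}; each $\delta_{n}$ is \emph{bounded}, hence $R_{\delta_{n}}\up G_{\delta_{n}}\in B(G_{\delta_{n}})$ by \cite[Lemma 1.7]{sil0}, and by Cor. \ref{09051744} together with Rmk. \ref{09071645} the forms $[\omega,\delta_{n},F]$ are smooth with respect to the norm topology on $B(G_{\delta_{n}})$. The decisive local computation is that, by the Chain Rule of Cor. \ref{09051744}, every partial derivative of $\zeta_{F,\omega_{i}^{\phi_{\alpha}}}^{R_{\delta_{n}}\up G_{\delta_{n}}}$ carries the factor $(R_{\delta_{n}}\up G_{\delta_{n}})\,\zeta_{\frac{dF}{d\lambda},\omega_{i}^{\phi_{\alpha}}}^{R_{\delta_{n}}\up G_{\delta_{n}}}$; combining this with Def. \ref{09211311} of $\mf{d}_{i}(\omega)$ and $\mf{n}_{i}(\omega)$ and with the coordinate expression of the exterior derivative yields
\begin{equation*}
d[\omega,\delta_{n},F]
=
(R_{\delta_{n}}\up G_{\delta_{n}})\sum_{i=1}^{N}(-1)^{i-1}\mf{d}_{i}(\omega)\cdot[\mf{n}_{i}(\omega),\delta_{n},\tfrac{dF}{d\lambda}].
\end{equation*}
Feeding this $N$-form into \cite[Thm. 2.54]{sil1} and factoring the bounded operator $R_{\delta_{n}}\up G_{\delta_{n}}$ out of the bulk integral produces, for every $n\in\N$, the bounded-piece \emph{equality}, which in particular yields the inclusion \eqref{19211001ta}.

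It remains only to secure the boundedness hypothesis \eqref{09042001} and to conclude. Since $\w{F}_{t}\in\mf{L}_{E}^{\infty}(\sigma(R))$ for every $t\in\R$, \cite[Thm. 18.2.11]{ds} gives $F_{t}(R)\in B(G)$, so the map $y\mapsto u_{y}(R)$ is genuinely $B(G)$-valued; as it is scalarly essentially $(\nu,B(G))$-integrable by hypothesis \ref{09071617h1}, its weak integral $h(R)\int u_{y}(R)\,d\nu(y)=\int(\imath_{\partial M}^{M})^{\times}([\omega,\sigma(R),F])$ belongs to $B(G)$, which is \eqref{09042001}. Now \eqref{09042001} and the inclusions \eqref{19211001ta} put us exactly in the hypotheses of the Extension Thm. \ref{18051958ta}, whose conclusion \eqref{16011401ta} is the desired Stokes identity; when $\partial M=\emptyset$ the index set $Y$ is empty and the right-hand side reduces to $\ze$, as claimed.

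I expect the principal obstacle to be the identity $d[\omega,\delta_{n},F]=(R_{\delta_{n}}\up G_{\delta_{n}})\sum_{i}(-1)^{i-1}\mf{d}_{i}(\omega)\cdot[\mf{n}_{i}(\omega),\delta_{n},\frac{dF}{d\lambda}]$: proving it forces one to interlace the operator-valued Chain Rule of Cor. \ref{09051744}, which produces the factor $R_{\delta_{n}}\up G_{\delta_{n}}$ and replaces $F$ by $\frac{dF}{d\lambda}$, with the differential-geometric bookkeeping of $\mf{d}_{i}$, $\mf{n}_{i}$ and the antisymmetrization built into $d$, and to verify that this holds compatibly across chart overlaps so that the global $N$-form is well defined in the sense of Def. \ref{09211309}. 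A secondary difficulty is the faithful translation of the manifold integrals into the parameter-space formalism of Thm. \ref{18051958ta}, where one must check that the smooth compactly supported partition-of-unity weights and the scalar coefficients $\mf{d}_{i}(\omega)$ do not disturb the scalar essential integrability granted by hypotheses \ref{09071617h1} and \ref{09071617h2}.
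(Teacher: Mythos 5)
Your proposal is correct and follows essentially the same route as the paper's proof: reduction to bounded pieces via an $E$-sequence of bounded sets, smoothness of $[\omega,\sigma_{n},F]$ from Rmk. \ref{09071645}, the chain rule of Cor. \ref{09051744} to produce the factor $R_{\sigma_{n}}\up G_{\sigma_{n}}$ and replace $F$ by $\frac{dF}{d\lambda}$ in the exterior derivative, the smooth Stokes theorem \cite[Thm. 2.54]{sil1} on each piece, and finally the Extension Thm. \ref{18051958ta} with $g=\mr{id}$, $h=1$ (the choice the paper leaves implicit), with \eqref{09042001} secured exactly as you say by the definition of scalar essential $(\nu,B(G))$-integrability. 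The two points you flag as obstacles are indeed where the paper spends its effort, resolving them in \eqref{09080638c}--\eqref{09080645} and via \cite[Cor. 2.53]{sil1} respectively.
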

%%%%%%%%%%%%%%%%%%%%%%%%%%%%%%%%%%%
%%%%%%%%%%%%%%%%%%%%%%%%%%%%%%%%%%%
%%%%%%%%%%%%%%%%%%%%%%%%%%%%%%%%%%%
\begin{remark}
\label{09211852}
Let $\mc{U}=\{(U_{\alpha},\phi_{\alpha})\}_{\alpha\in D}$ and $\mc{U}^{\partial}$ be as in Notation.
Thus $\mc{U}^{\partial}$ is a family of oriented charts of $\partial M$ such that $\{Q_{\alpha}\}_{\alpha\in D}$,
with $Q_{\alpha}=U_{\alpha}\cap\partial M$ for every $\alpha\in D$, is a collection of open sets of $\partial M$
and a covering of $\mr{supp}(\omega)\cap\partial M$ compact set of $\partial M$.
Next set $D^{\dagger}=D\cup\{\dagger\}$, $U_{\dagger}=\complement_{M}\mr{supp}(\omega)$,
$Q_{\dagger}=\complement_{\partial M}(\mr{supp}(\omega)\cap\partial M)$ and 
let $\{\psi_{\alpha}\}_{\alpha\in D^{\dagger}}$ be a smooth partition of unity of $M$
subordinate to $\{U_{\alpha}\}_{\alpha\in D^{\dagger}}$
and $\{k_{\alpha}\}_{\alpha\in D^{\dagger}}$ be a smooth partition of unity of $\partial M$ subordinate to
$\{Q_{\alpha}\}_{\alpha\in D^{\dagger}}$. Thus since \eqref{09241810} and \eqref{09211740} applied to $\delta=\sigma(R)$
the statement of Thm. \ref{09071617} reads as follows
\begin{multline*}
R
\int
\sum_{\alpha\in D}
\gamma_{\phi_{\alpha}}
(\imath_{U_{\alpha}}^{M}\circ\phi_{\alpha}^{-1})^{\ast}(\psi_{\alpha})
\sum_{i=1}^{N}(-1)^{i-1}\frac{\partial\omega_{i}^{\phi_{\alpha}}}{\partial e_{i}}
\zeta_{\frac{dF}{d\lambda},\omega_{i}^{\phi_{\alpha}}}^{R}\,d\lambda_{\phi_{\alpha}(U_{\alpha})} 
=
\\
\int
\sum_{\alpha\in D}
\gamma_{\phi_{\alpha}}
\left(\imath_{U_{\alpha}\cap\partial M}^{\partial M}\circ(\phi_{\alpha}^{\partial M})^{-1}\right)^{\ast}(k_{\alpha})
\left(\zeta_{F,\omega_{N}^{\phi_{\alpha}}}^{R}\circ\imath_{\phi_{\alpha}(U_{\alpha}\cap\partial M)}^{\phi_{\alpha}(U_{\alpha})}
\circ(\mf{i}_{\R^{N-1}}^{\R^{N}}\circ\imath_{\phi_{\alpha}^{\partial M}(U_{\alpha}\cap\partial M)}^{\R^{N-1}})_{\natural}\right)\,
d\lambda_{\phi_{\alpha}^{\partial M}(U_{\alpha}\cap\partial M)};
\end{multline*}
where $\mf{i}_{\R^{N-1}}^{\R^{N}}:\R^{N-1}\to\R^{N}$ is such that if $N>1$, then
$\Pr_{k}^{\R^{N}}\circ\mf{i}_{\R^{N-1}}^{\R^{N}}=\Pr_{k}^{\R^{N-1}}$ if $k\in[1,N-1]\cap\Z$, and
$\Pr_{N}^{\R^{N}}\circ\mf{i}_{\R^{N-1}}^{\R^{N}}=\ze_{\R^{N-1}}$ the constant map on $\R^{N-1}$ equal to $0$;
while $\mf{i}_{\R^{0}}^{\R^{1}}:0\to 0$. Notice that 
$(\mf{i}_{\R^{N-1}}^{\R^{N}}\circ\imath_{\phi_{\alpha}^{\partial M}(U_{\alpha}\cap\partial M)}^{\R^{N-1}})_{\natural}$
is a diffeomorphism of $\phi_{\alpha}^{\partial M}(U_{\alpha}\cap\partial M)$ onto $\phi_{\alpha}(U_{\alpha}\cap\partial M)$
thus the right-hand side of the above equality is well-set since hypothesis \eqref{09071617h1} and the theorem of
change of variable in multiple integrals.
\end{remark}
%%%%%%%%%%%%%%%%%%%%%%%%%%%%%%%%%%%%
%%%%%%%%%%%%%%%%%%%%%%%%%%%%%%%%%%%%
%%%%%%%%%%%%%%%%%%%%%%%%%%%%%%%%%%%%
\begin{remark}
The strategy employed to obtain Thm. \ref{09071617} is as follows:
Given an $E$-sequence of bounded sets $\{\sigma_{n}\}_{n\in\N}$ we apply for every $n\in\N$ the Stokes Thm.
for locally convex vector-valued forms \cite[Thm. 2.54]{sil1} to the
$\lr{B(G_{\sigma_{n}})}{\sigma(B(G_{\sigma_{n}}),\mc{N}_{\sigma_{n}})}$-valued form
$[\omega,\sigma_{n},F]$ which is smooth as a result of Rmk. \ref{09071645}. 
Then develop the terms of these equalities by employing the families of oriented charts $\mc{U}$ and $\mc{U}^{\partial}$, 
and the families of smooth maps $\{\psi_{\alpha}\}_{\alpha\in D}$ and $\{k_{\alpha}\}_{\alpha\in D}$.
Finally we apply the Extension Thm. \ref{18051958ta} to the sequence of the resulting equalities.
\end{remark}
%%%%%%%%%%%%%%%%%%%%%%%%%%%%%%%%%%%
%%%%%%%%%%%%%%%%%%%%%%%%%%%%%%%%%%%
%%%%%%%%%%%%%%%%%%%%%%%%%%%%%%%%%%%
\begin{remark}
\label{09191021}
Thm. \ref{09071617} establishes a Stokes-type equality for $\lr{B(G)}{\sigma(B(G),\mc{N})}$-valued
integrable forms: (1) that arise from the Borelian functional calculus of the possibly \textbf{unbounded} operator $R$;
(2) that might be \textbf{not} smooth nor even continuously differentiable. 
To this regard we notice that the rigidity of analytic functions prevents any reasonable attempt to use 
the strong operator derivability on $\mr{Dom}(R)$ in \cite[Thm. 1.23(2)]{sil0} in order to prove regularity of these
forms.
\end{remark}
%%%%%%%%%%%%%%%%%%%%%%%%%%%%%%%%%%%
%%%%%%%%%%%%%%%%%%%%%%%%%%%%%%%%%%% 
%%%%%%%%%%%%%%%%%%%%%%%%%%%%%%%%%%%
\begin{proof}[Proof of Thm. \ref{09071617}]
We maintain the data and notation introduced in Rmk. \ref{09211852}, in addition we let $(U,\phi)$ be an oriented chart
of $M$ and $h\in\mc{A}(M)$ and $k\in\mc{A}(\partial M)$ be such that
\begin{equation}
\label{09211256}
\begin{cases}
\mr{supp}(h)\subseteq U;
\\
\mr{supp}(k)\subseteq U\cap\partial M.
\end{cases}
\end{equation}
Let $\{\sigma_{n}\}_{n\in\N}$ be an $E$-sequence of bounded sets and $n\in\N$,
let $\delta\in\{\sigma_{n},\sigma(R)\}$, let $R^{\delta}$ denote $R_{\delta}\up G_{\delta}$ and let $\uppsi\in\mc{N}$.
By Rmk. \ref{09071645} and Rmk. \ref{09071646} we have that
$[\omega,\sigma_{n},F]\in\mr{Alt}^{N-1}(M,\lr{B(G_{\sigma_{n}})}{\sigma(B(G_{\sigma_{n}})),\mc{N}_{\sigma_{n}}})$
so by \cite[Thm. 2.42]{sil1}, \eqref{09211256}, since the unique element of a smooth
partition of the unity subordinated to the open covering $\{U\}$ of $U$ equals $1$ when evaluated on $U$,
and finally by \cite[Prp. 1.45]{sil1}, we have
\begin{equation}
\begin{aligned}
\label{09080638a}
\int\uppsi_{\times}(hd[\omega,\sigma_{n},F])
&=
\int hd(\uppsi_{\times}[\omega,\sigma_{n},F])
\\
&=
\gamma_{\phi}\int
(\imath_{U}^{M}\circ\phi^{-1})^{\ast}(h)
(\imath_{U}^{M}\circ\phi^{-1})^{\times}
(d\uppsi_{\times}[\omega,\sigma_{n},F]).
\end{aligned}
\end{equation}
Next 
\begin{equation}
\begin{aligned}
\label{09080638b}
(\imath_{U}^{M}\circ\phi^{-1})^{\times}d\uppsi_{\times}[\omega,\sigma_{n},F]
&=
(\phi^{-1})^{\times}(\imath_{U}^{M})^{\times}d\uppsi_{\times}[\omega,\sigma_{n},F]
\\
&=
\uppsi_{\times}d(\phi^{-1})^{\times}(\imath_{U}^{M})^{\times}[\omega,\sigma_{n},F]
\\
&=
\uppsi_{\times}d(\phi^{-1})^{\times}[\omega,\phi,\sigma_{n},F];
\end{aligned}
\end{equation}
where the second equality follows by \cite[Thm. 2.42]{sil1},
the third one by Def. \ref{09211309} applied to any atlas containing $(U,\phi)$.
Now by definition 
\begin{equation}
\label{09081215}
[\omega,\phi,\delta,F]
=\sum_{i=1}^{N}\left(\zeta_{F,\omega_{i}^{\phi}}^{R^{\delta}}\circ\phi\right)
\otimes(dx_{1}^{\phi}\wedge\dots\widehat{dx_{i}^{\phi}}\wedge\dots dx_{N}^{\phi});
\end{equation}
thus
\begin{equation}
\begin{aligned}
\label{09080638c}
d(\phi^{-1})^{\times}([\omega,\phi,\sigma_{n},F]))
&=
\sum_{i=1}^{N}(-1)^{i-1}
\frac{\partial\zeta_{F,\omega_{i}^{\phi}}^{R^{\sigma_{n}}}}{\partial e_{i}}
\otimes(dx_{1}^{\mr{Id}_{\phi(U)}}\wedge\dots dx_{i}^{\mr{Id}_{\phi(U)}}\wedge\dots dx_{N}^{\mr{Id}_{\phi(U)}})
\\
&=
\sum_{i=1}^{N}(-1)^{i-1}
R^{\sigma_{n}}\frac{\partial\omega_{i}^{\phi}}{\partial e_{i}}\zeta_{\frac{dF}{d\lambda},\omega_{i}^{\phi}}^{R^{\sigma_{n}}}
\otimes(dx_{1}^{\mr{Id}_{\phi(U)}}\wedge\dots dx_{i}^{\mr{Id}_{\phi(U)}}\wedge\dots dx_{N}^{\mr{Id}_{\phi(U)}});
\end{aligned}
\end{equation}
where the second equality follows since Cor. \ref{09051744}.
Next $R^{\sigma_{n}}$ is norm continuous, thus by the end of Rmk. \ref{09071645} we have that 
\begin{equation}
\label{09080638d}
\int
(\imath_{U}^{M}\circ\phi^{-1})^{\ast}(h)
R^{\sigma_{n}}
\frac{\partial\omega_{i}^{\phi}}{\partial e_{i}}\zeta_{\frac{dF}{d\lambda},\omega_{i}^{\phi}}^{R^{\sigma_{n}}}
d\lambda_{\phi(U)}
=
R^{\sigma_{n}}
\int
(\imath_{U}^{M}\circ\phi^{-1})^{\ast}(h)
\frac{\partial\omega_{i}^{\phi}}{\partial e_{i}}\zeta_{\frac{dF}{d\lambda},\omega_{i}^{\phi}}^{R^{\sigma_{n}}}
d\lambda_{\phi(U)}
\in B(G_{\sigma_{n}});
\end{equation}
the integrals being w.r.t. the norm topology on $B(G_{\sigma_{n}})$ then also w.r.t. the
$\lr{B(G_{\sigma_{n}})}{\sigma(B(G_{\sigma_{n}}),\mc{N}_{\sigma_{n}})}$ topology since Rmk. \ref{09080624}.
Now \eqref{09080638a},\,\eqref{09080638b},\,\eqref{09080638c} and \eqref{09080638d} yield
\begin{equation}
\label{09080645}
\begin{aligned}
\int hd[\omega,\sigma_{n},F]
&=
R^{\sigma_{n}}
\gamma_{\phi}
\int
(\imath_{U}^{M}\circ\phi^{-1})^{\ast}(h)
\sum_{i=1}^{N}(-1)^{i-1}\frac{\partial\omega_{i}^{\phi}}{\partial e_{i}}
\zeta_{\frac{dF}{d\lambda},\omega_{i}^{\phi}}^{R^{\sigma_{n}}} d\lambda_{\phi(U)}
\\
&=
R^{\sigma_{n}}
\int
h\sum_{i=1}^{N}(-1)^{i-1}\mf{d}_{i}(\omega)\cdot[\mf{n}_{i}(\omega),\sigma_{n},\frac{dF}{d\lambda}];
\end{aligned}
\end{equation}
integrals w.r.t. the $\lr{B(G_{\sigma_{n}})}{\sigma(B(G_{\sigma_{n}}),\mc{N}_{\sigma_{n}})}$ topology, where the second equality
follows by the next equality obtained by direct calculation 
\begin{equation}
\label{09241810pre}
\int
h\sum_{i=1}^{N}(-1)^{i-1}\mf{d}_{i}(\omega)\cdot[\mf{n}_{i}(\omega),\delta,\frac{dF}{d\lambda}]
=
\gamma_{\phi}
\int
(\imath_{U}^{M}\circ\phi^{-1})^{\ast}(h)
\sum_{i=1}^{N}(-1)^{i-1}\frac{\partial\omega_{i}^{\phi}}{\partial e_{i}}
\zeta_{\frac{dF}{d\lambda},\omega_{i}^{\phi}}^{R^{\delta}} d\lambda_{\phi(U)}.
\end{equation}
Now by \eqref{09080645} applied to $(U,\phi)=(U_{\alpha},\phi_{\alpha})$ and $h=\psi_{\alpha}$ for every
$\alpha\in D$ and since \cite[Cor.2.53]{sil1}  we obtain
\begin{equation}
\label{09211741}
\begin{aligned}
\int d[\omega,\sigma_{n},F]
&=
R^{\sigma_{n}}
\int
\sum_{i=1}^{N}(-1)^{i-1}\mf{d}_{i}(\omega)\cdot[\mf{n}_{i}(\omega),\sigma_{n},\frac{dF}{d\lambda}]
\\
&=
R^{\sigma_{n}}
\int
\sum_{\alpha\in D}
\gamma_{\phi_{\alpha}}
(\imath_{U_{\alpha}}^{M}\circ\phi_{\alpha}^{-1})^{\ast}(\psi_{\alpha})
\sum_{i=1}^{N}(-1)^{i-1}\frac{\partial\omega_{i}^{\phi_{\alpha}}}{\partial e_{i}}
\zeta_{\frac{dF}{d\lambda},\omega_{i}^{\phi_{\alpha}}}^{R^{\sigma_{n}}} d\lambda_{\phi_{\alpha}(U_{\alpha})};
\end{aligned}
\end{equation}
where all the three integrals are w.r.t.the $\lr{B(G_{\sigma_{n}})}{\sigma(B(G_{\sigma_{n}}),\mc{N}_{\sigma_{n}})}$ topology.
Now if $\partial M=\emptyset$ the statement follows by the above equality, \cite[Thm. 2.54]{sil1} and by our
Extension Thm. \ref{18051958ta}. Thus in what follows assume in addition that $\partial M\neq\emptyset$ and that
$(U,\phi)$ is a boundary chart, therefore  $(U,\phi^{\partial M})$ is a chart of $\partial M$ 
such that $\gamma_{\phi^{\partial M}}=\gamma_{\phi}$. Next since the unique element of a smooth
partition of the unity subordinated to the open covering $\{U\cap\partial M\}$, w.r.t. the topological space $\partial M$,
of $U\cap\partial M$ equals $1$ when evaluated on $U\cap\partial M$, we have by \cite[Thm. 2.42]{sil1}, \eqref{09211256},
\cite[Thm. 1.45]{sil1} and $\gamma_{\phi^{\partial M}}=\gamma_{\phi}$
\begin{equation}
\label{09201247}  
\begin{aligned}
\int
\uppsi_{\times}\left(k(\imath_{\partial M}^{M})^{\times}[\omega,\delta,F]\right)
&=
\int
k\uppsi_{\times}(\imath_{\partial M}^{M})^{\times}[\omega,\delta,F]
\\
&=
\gamma_{\phi}
\int
\left((\imath_{U\cap\partial M}^{\partial M}\circ(\phi^{\partial M})^{-1})^{\ast}k\right)
(\imath_{U\cap\partial M}^{\partial M}\circ(\phi^{\partial M})^{-1})^{\times}
\uppsi_{\times}(\imath_{\partial M}^{M})^{\times}[\omega,\delta,F]
\\
&=
\gamma_{\phi}
\int
\left((\imath_{U\cap\partial M}^{\partial M}\circ(\phi^{\partial M})^{-1})^{\ast}k\right)
\uppsi_{\times}
((\phi^{\partial M})^{-1})^{\times}(\imath_{U\cap\partial M}^{\partial M})^{\times}(\imath_{\partial M}^{M})^{\times}[\omega,\delta,F]
\\
&=
\gamma_{\phi}
\int
\left((\imath_{U\cap\partial M}^{\partial M}\circ(\phi^{\partial M})^{-1})^{\ast}k\right)
\uppsi_{\times}
((\phi^{\partial M})^{-1})^{\times}(\imath_{U\cap\partial M}^{M})^{\times}[\omega,\delta,F]
\\
&=
\gamma_{\phi}
\int
\left((\imath_{U\cap\partial M}^{\partial M}\circ(\phi^{\partial M})^{-1})^{\ast}k\right)
\uppsi_{\times}
((\phi^{\partial M})^{-1})^{\times}(\imath_{U\cap\partial M}^{U})^{\times}(\imath_{U}^{M})^{\times}[\omega,\delta,F].
\end{aligned}
\end{equation}
Next by \eqref{09081215} and since $(\imath_{U\cap\partial M}^{U})^{\ast}(dx_{N}^{\phi})=\ze$, we obtain 
\begin{equation*}
\begin{aligned}
(\imath_{U\cap\partial M}^{U})^{\times}(\imath_{U}^{M})^{\times}[\omega,\delta,F]
&=
(\imath_{U\cap\partial M}^{U})^{\times}[\omega,\phi,\delta,F]
\\
&=
(\zeta_{F,\omega_{N}^{\phi}}^{R^{\delta}}\circ\phi\circ\imath_{U\cap\partial M}^{U})
\otimes
\bigwedge_{s=1}^{N-1}
(\imath_{U\cap\partial M}^{U})^{\ast}
(dx_{s}^{\phi})
\\
&=
(\zeta_{F,\omega_{N}^{\phi}}^{R^{\delta}}\circ\phi\circ\imath_{U\cap\partial M}^{U})
\otimes
\bigwedge_{s=1}^{N-1}
dx_{s}^{\phi^{\partial M}};
\end{aligned}
\end{equation*}
and by letting $Z\coloneqq\phi^{\partial M}(U\cap\partial M)$
\begin{equation}
\label{09201244}
((\phi^{\partial M})^{-1})^{\times}(\imath_{U\cap\partial M}^{U})^{\times}(\imath_{U}^{M})^{\times}[\omega,\delta,F]
=
(\zeta_{F,\omega_{N}^{\phi}}^{R^{\delta}}\circ\phi\circ\imath_{U\cap\partial M}^{U}\circ(\phi^{\partial M})^{-1})
\otimes\bigwedge_{s=1}^{N-1}dx_{s}^{\mr{Id}_{Z}}.
\end{equation}
Next
\begin{equation*}
\mf{i}_{\R^{N-1}}^{\R^{N}}(Z)=\phi(U\cap\partial M)\subseteq\partial\mathbb{H}^{N};
\end{equation*}
by definition of boundary chart of $M$.
Define $\mr{P}_{[N]}\coloneqq\mf{i}_{\R^{N-1}}^{\R^{N}}\circ\mr{P}^{[N]}$, thus 
by letting $\Pr(\R^{N})$ be the set of projectors of $\R^{N}$, we have  
\begin{equation}
\label{09201239}
\begin{cases}
\mr{P}_{[N]}\in\Pr(\R^{N}),
\\
\partial\mathbb{H}^{N}=\mr{P}_{[N]}(\R^{N});
\end{cases}
\end{equation}
moreover by definition of $\phi^{\partial M}$ we have 
\begin{equation}
\label{09201146}
(\forall x\in Z)
\left(\mr{P}_{[N]}\left((\phi\circ\imath_{U\cap\partial M}^{U}\circ(\phi^{\partial M})^{-1})(x)\right)
=
\mf{i}_{\R^{N-1}}^{\R^{N}}(x)\right).
\end{equation}
Now $(\phi\circ\imath_{U\cap\partial M}^{U}\circ(\phi^{\partial M})^{-1})(x)\in\partial\mathbb{H}^{N}$
since $\phi$ is a boundary chart of $M$, therefore by \eqref{09201146} and \eqref{09201239} we obtain 
\begin{equation*}
\phi\circ\imath_{U\cap\partial M}^{U}\circ(\phi^{\partial M})^{-1}
=
\imath_{\phi(U\cap\partial M)}^{\phi(U)}\circ(\mf{i}_{\R^{N-1}}^{\R^{N}}\circ\imath_{Z}^{\R^{N-1}})_{\natural}.
\end{equation*}
Therefore by \eqref{09201244}
\begin{equation*}
((\phi^{\partial M})^{-1})^{\times}(\imath_{U\cap\partial M}^{U})^{\times}(\imath_{U}^{M})^{\times}[\omega,\delta,F]
=
\left(\zeta_{F,\omega_{N}^{\phi}}^{R^{\delta}}\circ\imath_{\phi(U\cap\partial M)}^{\phi(U)}
\circ(\mf{i}_{\R^{N-1}}^{\R^{N}}\circ\imath_{Z}^{\R^{N-1}})_{\natural}\right)
\otimes
\bigwedge_{s=1}^{N-1}
dx_{s}^{\mr{Id}_{Z}};
\end{equation*}
hence by \eqref{09201247} we obtain
\begin{equation}
\label{09081235}
\int
k(\imath_{\partial M}^{M})^{\times}[\omega,\delta,F]
=
\gamma_{\phi}
\int
((\imath_{U\cap\partial M}^{\partial M}\circ(\phi^{\partial M})^{-1})^{\ast}k)
\left(\zeta_{F,\omega_{N}^{\phi}}^{R^{\delta}}
\circ\imath_{\phi(U\cap\partial M)}^{\phi(U)}
\circ(\mf{i}_{\R^{N-1}}^{\R^{N}}\circ\imath_{Z}^{\R^{N-1}})_{\natural}\right)\,d\lambda_{Z};
\end{equation}
where the integrals are w.r.t. the $\lr{B(G_{\delta})}{\sigma(B(G_{\delta}),\mc{N}_{\delta})}$ topology.
Now by \eqref{09081235} applied to $(U,\phi)=(U_{\alpha},\phi_{\alpha})$ and $k=k_{\alpha}$ for every
$\alpha\in D$ and since \cite[Cor. 2.53]{sil1} we obtain by letting
$Z_{\alpha}\coloneqq\phi_{\alpha}^{\partial M}(U_{\alpha}\cap\partial M)$
\begin{equation}
\label{09211740}
\int(\imath_{\partial M}^{M})^{\times}[\omega,\delta,F]
=
\int
\sum_{\alpha\in D}
\gamma_{\phi_{\alpha}}
((\imath_{U_{\alpha}\cap\partial M}^{\partial M}\circ(\phi_{\alpha}^{\partial M})^{-1})^{\ast}k_{\alpha})
\left(\zeta_{F,\omega_{N}^{\phi_{\alpha}}}^{R^{\delta}}\circ\imath_{\phi_{\alpha}(U_{\alpha}\cap\partial M)}^{\phi_{\alpha}(U_{\alpha})}
\circ(\mf{i}_{\R^{N-1}}^{\R^{N}}\circ\imath_{Z_{\alpha}}^{\R^{N-1}})_{\natural}\right)\,
d\lambda_{Z_{\alpha}}.
\end{equation}
Next by \eqref{09241810pre} applied to $(U,\phi)=(U_{\alpha},\phi_{\alpha})$ and $h=\psi_{\alpha}$ for every
$\alpha\in D$ and since \cite[Cor. 2.53]{sil1} we obtain 
\begin{equation}
\label{09241810}
\int
\sum_{i=1}^{N}(-1)^{i-1}\mf{d}_{i}(\omega)\cdot[\mf{n}_{i}(\omega),\delta,\frac{dF}{d\lambda}]
=
\int
\sum_{\alpha\in D}
\gamma_{\phi_{\alpha}}
(\imath_{U_{\alpha}}^{M}\circ\phi_{\alpha}^{-1})^{\ast}(\psi_{\alpha})
\sum_{i=1}^{N}(-1)^{i-1}\frac{\partial\omega_{i}^{\phi_{\alpha}}}{\partial e_{i}}
\zeta_{\frac{dF}{d\lambda},\omega_{i}^{\phi_{\alpha}}}^{R^{\delta}} d\lambda_{\phi_{\alpha}(U_{\alpha})}.
\end{equation}
Now since \cite[Thm. 2.54]{sil1} applied to the form $[\omega,\sigma_{n},F]$, since \eqref{09211741} and
since \eqref{09211740} applied to $\delta=\sigma_{n}$ we obtain
\begin{multline*}
R^{\sigma_{n}}
\int
\sum_{\alpha\in D}
\gamma_{\phi_{\alpha}}
(\imath_{U_{\alpha}}^{M}\circ\phi_{\alpha}^{-1})^{\ast}(\psi_{\alpha})
\sum_{i=1}^{N}(-1)^{i-1}\frac{\partial\omega_{i}^{\phi_{\alpha}}}{\partial e_{i}}
\zeta_{\frac{dF}{d\lambda},\omega_{i}^{\phi_{\alpha}}}^{R^{\sigma_{n}}}\,d\lambda_{\phi_{\alpha}(U_{\alpha})}
=
\\
\int
\sum_{\alpha\in D}
\gamma_{\phi_{\alpha}}
((\imath_{U_{\alpha}\cap\partial M}^{\partial M}\circ(\phi_{\alpha}^{\partial M})^{-1})^{\ast}k_{\alpha})
\left(\zeta_{F,\omega_{N}^{\phi_{\alpha}}}^{R^{\sigma_{n}}}\circ\imath_{\phi_{\alpha}(U_{\alpha}\cap\partial M)}^{\phi_{\alpha}(U_{\alpha})}
\circ(\mf{i}_{\R^{N-1}}^{\R^{N}}\circ\imath_{Z_{\alpha}}^{\R^{N-1}})_{\natural}\right)\,d\lambda_{Z_{\alpha}}.
\end{multline*}
Now we can employ our Extension Thm. \ref{18051958ta} to the above sequence of equality to obtain 
\begin{multline*}
R
\int
\sum_{\alpha\in D}
\gamma_{\phi_{\alpha}}
(\imath_{U_{\alpha}}^{M}\circ\phi_{\alpha}^{-1})^{\ast}(\psi_{\alpha})
\sum_{i=1}^{N}(-1)^{i-1}\frac{\partial\omega_{i}^{\phi_{\alpha}}}{\partial e_{i}}
\zeta_{\frac{dF}{d\lambda},\omega_{i}^{\phi_{\alpha}}}^{R}\,d\lambda_{\phi_{\alpha}(U_{\alpha})} 
=
\\
\int
\sum_{\alpha\in D}
\gamma_{\phi_{\alpha}}
\left(\imath_{U_{\alpha}\cap\partial M}^{\partial M}\circ(\phi_{\alpha}^{\partial M})^{-1}\right)^{\ast}(k_{\alpha})
\left(\zeta_{F,\omega_{N}^{\phi_{\alpha}}}^{R}\circ\imath_{\phi_{\alpha}(U_{\alpha}\cap\partial M)}^{\phi_{\alpha}(U_{\alpha})}
\circ(\mf{i}_{\R^{N-1}}^{\R^{N}}\circ\imath_{\phi_{\alpha}^{\partial M}(U_{\alpha}\cap\partial M)}^{\R^{N-1}})_{\natural}\right)\,
d\lambda_{\phi_{\alpha}^{\partial M}(U_{\alpha}\cap\partial M)};
\end{multline*}
and the statement follows since \eqref{09241810} and \eqref{09211740} applied to $\delta=\sigma(R)$.
\end{proof}
%%%%%%%%%%%%%%%%%%%%%%%%%%%%%%%%%%%
%%%%%%%%%%%%%%%%%%%%%%%%%%%%%%%%%%%
%%%%%%%%%%%%%%%%%%%%%%%%%%%%%%%%%%%
\begin{corollary}
\label{09261423}
Let $M$ be oriented with boundary and $\omega\in\mr{Alt}_{c}^{N-1}(M)$.
Let $V$ be an open neighbourhood of $\sigma(R)$ such that $\R\cdot V\subseteq V$ and $F:V\to\C$ be analytic. 
Assume that there exists a finite collection $\mc{U}=\{(U_{\alpha},\phi_{\alpha})\}_{\alpha\in D}$
of oriented charts of $M$ such that $\{U_{\alpha}\}_{\alpha\in D}$ is a covering of the support of $\omega$ and
\begin{enumerate}
\item
$\widetilde{F}_{t}\in\mf{L}_{E}^{\infty}(\sigma(R))$ for every $t\in\R$, and for all $\alpha\in D$ such that $\phi_{\alpha}$
is a boundary chart, the map
\begin{equation*}
\uppsi\circ\zeta_{F,\omega_{N}^{\phi_{\alpha}}}^{R}\circ\imath_{\phi_{\alpha}(U_{\alpha}\cap\partial M)}^{\phi_{\alpha}(U_{\alpha})}
\end{equation*}
is $\lambda_{\phi_{\alpha}(U_{\alpha}\cap\partial M)}$-measurable for every $\uppsi\in\mc{N}$;
and 
\begin{equation*}
\int^{\ast}
\|\cdot\|_{\infty}^{E}\circ\widetilde{F}_{\omega_{N}^{\phi_{\alpha}}\circ\imath_{\phi_{\alpha}(U_{\alpha}\cap\partial M)}^{\phi_{\alpha}(U_{\alpha})}}
\,d\lambda_{\phi_{\alpha}(U_{\alpha}\cap\partial M)}
<\infty;
\end{equation*}
\label{09261423hp1}
\item
$(\widetilde{\frac{dF}{d\lambda}})_{t}\in\mf{L}_{E}^{\infty}(\sigma(R))$ for every $t\in\R$, and for all $\alpha\in D$ and 
$i\in[1,N]\cap\Z$, the map 
\begin{equation*}
\uppsi\circ\zeta_{\frac{dF}{d\lambda},\omega_{i}^{\phi_{\alpha}}}^{R}
\end{equation*}
is $\lambda_{\phi_{\alpha}(U_{\alpha})}$-measurable for every $\uppsi\in\mc{N}$;
and 
\begin{equation*}
\int^{\ast}
\|\cdot\|_{\infty}^{E}\circ\left(\widetilde{\frac{dF}{d\lambda}}\right)_{\omega_{i}^{\phi_{\alpha}}}
\,d\lambda_{\phi_{\alpha}(U_{\alpha})}<\infty.
\end{equation*}
\label{09261423hp2}
\end{enumerate}
Thus the statement of Thm. \ref{09071617} holds true.
Moreover if in addition $\mc{N}$ is an $E$-appropriate set with the isometric duality property and 
$C\doteqdot\sup_{\sigma\in\mc{B}(\C)}\|E(\sigma)\|$, then we obtain the following estimates
\begin{equation*}
\left\|\int\zeta_{F,\omega_{N}^{\phi_{\alpha}}}^{R}\circ\imath_{\phi_{\alpha}(U_{\alpha}\cap\partial M)}^{\phi_{\alpha}(U_{\alpha})}\,
d\lambda_{\phi_{\alpha}(U_{\alpha}\cap\partial M)}\right\|
\leq
4C\int^{\ast}
\|\cdot\|_{\infty}^{E}\circ
\widetilde{F}_{\omega_{N}^{\phi_{\alpha}}\circ\imath_{\phi_{\alpha}(U_{\alpha}\cap\partial M)}^{\phi_{\alpha}(U_{\alpha})}}
\,d\lambda_{\phi_{\alpha}(U_{\alpha}\cap\partial M)},
\end{equation*}
and
\begin{equation*}
\left\|\int\zeta_{\frac{dF}{d\lambda},\omega_{i}^{\phi_{\alpha}}}^{R}\,d\lambda_{\phi_{\alpha}(U_{\alpha})}\right\|
\leq
4C\int^{\ast}\|\cdot\|_{\infty}^{E}\circ\left(\widetilde{\frac{dF}{d\lambda}}\right)_{\omega_{i}^{\phi_{\alpha}}}
\,d\lambda_{\phi_{\alpha}(U_{\alpha})}.
\end{equation*}
\end{corollary}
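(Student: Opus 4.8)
The plan is to deduce Cor. \ref{09261423} from Thm. \ref{09071617} in two stages: first I would show that the measurability and finite-upper-integral conditions \eqref{09261423hp1} and \eqref{09261423hp2} are sufficient for the scalar essential integrability hypotheses \eqref{09071617h1} and \eqref{09071617h2} of Thm. \ref{09071617}; then, under the additional isometric duality assumption, I would establish the two norm estimates.

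For the first stage, recall that scalar essential $(\lambda,B(G))$-integrability with respect to $\sigma(B(G),\mc{N})$ asks exactly that $\uppsi\circ\zeta$ be essentially $\lambda$-integrable for every functional $\uppsi\in\mc{N}$ (these being the relevant continuous functionals for the $\sigma(B(G),\mc{N})$-topology) together with membership of the weak integral in $B(G)$. The assumed $\lambda$-measurability of $\uppsi\circ\zeta_{F,\omega_{N}^{\phi_{\alpha}}}^{R}\circ\imath_{\phi_{\alpha}(U_{\alpha}\cap\partial M)}^{\phi_{\alpha}(U_{\alpha})}$ for every $\uppsi\in\mc{N}$, combined with finiteness of the upper integral of $\|\cdot\|_{\infty}^{E}\circ\w{F}_{\omega_{N}^{\phi_{\alpha}}\circ\imath_{\phi_{\alpha}(U_{\alpha}\cap\partial M)}^{\phi_{\alpha}(U_{\alpha})}}$, are precisely the conditions under which \cite[Thm. 2.22]{sil0} yields the scalar essential $(\lambda_{\phi_{\alpha}(U_{\alpha}\cap\partial M)},B(G))$-integrability required in \eqref{09071617h1}; the finite upper integral furnishes the integrable majorant via the spectral bound recalled below, and it is this same theorem that also delivers the nontrivial conclusion that the weak integral lies in $B(G)$. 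The identical argument applied with \eqref{09261423hp2} gives \eqref{09071617h2}. Hence both hypotheses of Thm. \ref{09071617} are in force and its conclusion holds verbatim.

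For the estimates, the decisive tool is the spectral norm bound for scalar type spectral operators: by \cite[Thm. 18.2.11]{ds}, for every $\w{\phi}\in\mf{L}_{E}^{\infty}(\sigma(R))$ one has $\phi(R)\in B(G)$ with $\|\phi(R)\|\le 4C\,\|\w{\phi}\|_{\infty}^{E}$, where $C=\sup_{\sigma\in\mc{B}(\C)}\|E(\sigma)\|$ and the factor $4$ is the usual Dunford--Schwartz constant arising from the decomposition of the complex spectral measure into four positive parts. Applying this pointwise to $\phi=F_{(\omega_{N}^{\phi_{\alpha}}\circ\imath_{\phi_{\alpha}(U_{\alpha}\cap\partial M)}^{\phi_{\alpha}(U_{\alpha})})(y)}$ gives $\|\zeta_{F,\omega_{N}^{\phi_{\alpha}}}^{R}(y)\|\le 4C\,(\|\cdot\|_{\infty}^{E}\circ\w{F}_{\omega_{N}^{\phi_{\alpha}}\circ\imath_{\phi_{\alpha}(U_{\alpha}\cap\partial M)}^{\phi_{\alpha}(U_{\alpha})}})(y)$, and analogously for the $\frac{dF}{d\lambda}$ integrand. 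Under the isometric duality property, $\mc{N}$ norms $B(G)$, that is $\|A\|=\sup\{|\uppsi(A)|\,:\,\uppsi\in\mc{N},\ \|\uppsi\|\le 1\}$ for every $A\in B(G)$. I would then bound, for each such $\uppsi$, $|\uppsi(\int\zeta\,d\lambda)|=|\int(\uppsi\circ\zeta)\,d\lambda|\le\int^{\ast}|\uppsi\circ\zeta|\,d\lambda\le\int^{\ast}\|\zeta\|\,d\lambda\le 4C\int^{\ast}\|\cdot\|_{\infty}^{E}\circ\w{F}_{\dots}\,d\lambda$, using $\|\uppsi\|\le 1$ and the pointwise bound; taking the supremum over admissible $\uppsi$ produces the first displayed estimate, and the same computation with the $\frac{dF}{d\lambda}$ integrand yields the second.

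The main obstacle is the membership of the weak integrals in $B(G)$, which is not automatic from scalar integrability alone and is exactly what \cite[Thm. 2.22]{sil0} supplies once the finite-upper-integral majorant is available; this is why hypotheses \eqref{09261423hp1} and \eqref{09261423hp2} cannot be weakened to mere pointwise integrability. The remaining delicate point is purely bookkeeping: in passing from the scalar bound $|\int(\uppsi\circ\zeta)\,d\lambda|$ to the operator-norm bound one must check that the supremum over $\uppsi\in\mc{N}$ with $\|\uppsi\|\le 1$ genuinely recovers $\|\cdot\|$ (the isometric duality property) and that the pointwise majorization survives passage to the upper integral. No constant beyond the $4C$ contributed by the spectral bound is incurred.
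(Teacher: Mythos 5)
Your proposal is correct and follows essentially the same route as the paper: you verify the hypotheses of Thm.~\ref{09071617} by combining the assumed measurability with the pointwise spectral bound $\|F_{t}(R)\|\leq 4C\|\widetilde{F}_{t}\|_{\infty}^{E}$ from \cite[Thm. 18.2.11(c)]{ds} to produce an integrable norm-majorant, then invoke the dominated-weak-integral theorem of \cite{sil0} to get membership of the integrals in $B(G)$, and derive the estimates from the isometric duality property exactly as the paper does via the estimate in \cite[Thm. 2.2]{sil0}. The only discrepancy is the citation label (the paper uses \cite[Thm. 2.2]{sil0}, not Thm.~2.22), which is immaterial to the substance.
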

%%%%%%%%%%%%%%%%%%%%%%%%%%%%%%%%%%%
%%%%%%%%%%%%%%%%%%%%%%%%%%%%%%%%%%%
%%%%%%%%%%%%%%%%%%%%%%%%%%%%%%%%%%%
\begin{remark}
By letting $\mr{Bor}(\C)$ be the set of complex valued Borelian maps on $\C$, we have  
\begin{equation*}
\begin{cases}
\widetilde{F}_{\omega_{N}^{\phi_{\alpha}}\circ\imath_{\phi_{\alpha}(U_{\alpha}\cap\partial M)}^{\phi_{\alpha}(U_{\alpha})}}:
\phi_{\alpha}(U_{\alpha}\cap\partial M)\to\mr{Bor}(\C)
\\
x\mapsto\widetilde{F}_{(\omega_{N}^{\phi_{\alpha}}\circ\imath_{\phi_{\alpha}(U_{\alpha}\cap\partial M)}^{\phi_{\alpha}(U_{\alpha})})(x)},
\end{cases}
\end{equation*}
and
\begin{equation*}
\begin{cases}
\left(\widetilde{\frac{dF}{d\lambda}}\right)_{\omega_{i}^{\phi_{\alpha}}}:
\phi_{\alpha}(U_{\alpha})\to\mr{Bor}(\C) 
\\
x\mapsto\left(\widetilde{\frac{dF}{d\lambda}}\right)_{\omega_{i}^{\phi_{\alpha}}(x)};
\end{cases}
\end{equation*}
where we recall that $L_{t}:\lambda\mapsto L(t\lambda)$ for any $L\in\mr{Bor}(\C)$ and any $t\in\R$.
Therefore the upper integrals in hypotheses \eqref{09261423hp1} and \eqref{09261423hp2} are well-set.
\end{remark}  
%%%%%%%%%%%%%%%%%%%%%%%%%%%%%%%%%%%
%%%%%%%%%%%%%%%%%%%%%%%%%%%%%%%%%%%
%%%%%%%%%%%%%%%%%%%%%%%%%%%%%%%%%%%
\begin{proof}
By \cite[(1.42)]{sil0}, hypotheses, and \cite[Thm. 18.2.11(c)]{ds} we obtain that
\begin{equation*}
\|\cdot\|\circ\zeta_{F,\omega_{N}^{\phi_{\alpha}}}^{R}\circ\imath_{\phi_{\alpha}(U_{\alpha}\cap\partial M)}^{\phi_{\alpha}(U_{\alpha})}
\in\mf{F}_{1}(\phi_{\alpha}(U_{\alpha}\cap\partial M),\lambda_{\phi_{\alpha}(U_{\alpha}\cap\partial M)}),
\end{equation*}
and 
\begin{equation*}
\|\cdot\|\circ\zeta_{\frac{dF}{d\lambda},\omega_{i}^{\phi_{\alpha}}}^{R}
\in\mf{F}_{1}(\phi_{\alpha}(U_{\alpha}),\lambda_{\phi_{\alpha}(U_{\alpha})}).
\end{equation*}
Then the hypotheses of Thm. \ref{09071617} are satisfied by \cite[footnote $1$ pg. 39]{sil0} and by \cite[Thm. 2.2]{sil0}
and the first part of the statement follows. The estimates in the statement follow by the estimate in
\cite[Thm. 2.2]{sil0} and by \cite[Thm. 18.2.11(c)]{ds}.
\end{proof}
%%%%%%%%%%%%%%%%%%%%%%%%%%%%%%%%%%%
%%%%%%%%%%%%%%%%%%%%%%%%%%%%%%%%%%%
%%%%%%%%%%%%%%%%%%%%%%%%%%%%%%%%%%%
\begin{corollary}
\label{09262234}
The statement of Cor. \ref{09261423} holds if $G$ is a complex Hilbert space and $\mc{N}$ is replaced by $\mc{N}_{pd}(G)$.
\end{corollary}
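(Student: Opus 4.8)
The plan is to reduce Cor. \ref{09262234} to Cor. \ref{09261423} by verifying that, when $G$ is a complex Hilbert space, the choice $\mc{N}=\mc{N}_{pd}(G)$ satisfies the two structural requirements on $\mc{N}$ built into the hypotheses of Cor. \ref{09261423}: that $\mc{N}_{pd}(G)$ is an $E$-appropriate set in the sense of \cite[Def. 2.11]{sil0}, and that it possesses the isometric duality property. Granting these, every remaining hypothesis of Cor. \ref{09261423} --- the $\lambda$-measurability of $\uppsi\circ\zeta_{F,\omega_{N}^{\phi_{\alpha}}}^{R}$ for each $\uppsi\in\mc{N}_{pd}(G)$, together with the finiteness of the upper integrals in \eqref{09261423hp1} and \eqref{09261423hp2} --- is exactly the hypothesis we are given, so the Stokes equality and the two norm estimates transfer without further work.

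First I would establish $E$-appropriateness. For a Hilbert space the functionals comprising $\mc{N}_{pd}(G)$ are the normal (trace-class) functionals on $B(G)$, and the clauses of \cite[Def. 2.11]{sil0} --- separation of the points of $B(G)$ and stability under passage to the reducing subspaces $G_{\sigma}$ via the spectral projections $E(\sigma)$ --- reduce to standard properties of the predual of $B(G)$. Where possible I would simply cite the corresponding verification in \cite{sil0}; otherwise it is a routine check that the normal functionals separate $B(G)$ and restrict to normal functionals on each $B(G_{\sigma})$.

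The only substantive point, and the place where the Hilbert space hypothesis is genuinely used, is the isometric duality property. Here one invokes the isometric identity $\|T\|=\sup\{|\lr{Tx}{y}|:\|x\|\leq 1,\ \|y\|\leq 1\}$, valid for every $T\in B(G)$ when $G$ is a Hilbert space, which exhibits $B(G)$ as the isometric dual of its predual $\mc{N}_{pd}(G)$. I expect the main obstacle to be purely organizational: one must check that this isometric duality descends to each compressed algebra $B(G_{\sigma})$ paired with the induced set $(\mc{N}_{pd}(G))_{\sigma}$, and that it does so uniformly, so as to produce the single constant $C\doteqdot\sup_{\sigma\in\mc{B}(\C)}\|E(\sigma)\|$ appearing in the estimates. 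Since each $G_{\sigma}$ is itself a Hilbert space and the induced functionals are again normal, the isometric identity survives on every piece, and the factor $C$ absorbs the bounds on the (in general non-orthogonal) projections $E(\sigma)$.

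With both properties in hand, the argument closes by applying Cor. \ref{09261423} with $\mc{N}=\mc{N}_{pd}(G)$: hypotheses \eqref{09261423hp1} and \eqref{09261423hp2} hold by assumption, so the conclusion of Thm. \ref{09071617} holds and the two displayed bounds hold with the same factor $4C$. This yields Cor. \ref{09262234}.
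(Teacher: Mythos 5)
Your proposal is correct and follows essentially the same route as the paper, which simply invokes the end of \cite[Rmk. 2.12]{sil0} (where $\mc{N}_{pd}(G)$ is verified to be an $E$-appropriate set with the isometric duality property for a complex Hilbert space $G$) and then applies Cor. \ref{09261423}. Your expanded sketch of that verification is consistent with what the cited remark provides, so the reduction goes through as you describe.
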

\begin{proof}
By the end of \cite[Rmk. 2.12]{sil0} and by Cor. \ref{09261423}.
\end{proof}
%%%%%%%%%%%%%%%%%%%%%%%%%%%%%%%%%%%
%%%%%%%%%%%%%%%%%%%%%%%%%%%%%%%%%%%
%%%%%%%%%%%%%%%%%%%%%%%%%%%%%%%%%%%
\begin{corollary}
\label{09262238}
The statement of Cor. \ref{09261423} holds if $G$ is reflexive and $\mc{N}$ is replaced by
$\mc{N}_{st}(G)$.
\end{corollary}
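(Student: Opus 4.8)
The plan is to obtain the result as a direct specialization of Cor. \ref{09261423}, exactly as Cor. \ref{09262234} was obtained in the Hilbert-space case. Concretely, one verifies that under the reflexivity hypothesis the set $\mc{N}_{st}(G)$ is an $E$-appropriate set possessing the isometric duality property, and then applies Cor. \ref{09261423} verbatim with $\mc{N}$ replaced by $\mc{N}_{st}(G)$; both the Stokes equality of the first part and the norm estimates of the second part transfer without change.

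The verification of these two properties of $\mc{N}_{st}(G)$ is not carried out afresh here but is quoted from \cite{sil0}, where $\mc{N}_{st}(G)$ is introduced and its structural features are analysed; the relevant facts sit alongside those concerning $\mc{N}_{pd}(G)$ that were invoked for Cor. \ref{09262234} near \cite[Rmk. 2.12]{sil0}. In outline, $E$-appropriateness amounts to checking that the standard functionals lie in $B(G)^{\prime}$, separate the points of $B(G)$, and behave correctly under multiplication by the spectral projections $E(\sigma)$ and restriction to the reducing subspaces $G_{\sigma}$, as required by \cite[Def. 2.11]{sil0}.

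The genuine use of reflexivity lies in the isometric duality property: it is reflexivity of $G$ that upgrades $\mc{N}_{st}(G)$ from a merely norming family of functionals to an isometric predual of $\lr{B(G)}{\|\cdot\|}$, so that the estimates produced by the second part of Cor. \ref{09261423} are the operator-norm estimates recorded in the statement. For a non-reflexive $G$ this isometric identification can fail, which is exactly why the hypothesis is imposed. Granting the two properties quoted from \cite{sil0}, the corollary follows by a single application of Cor. \ref{09261423}.
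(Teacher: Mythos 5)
Your proposal reaches the stated conclusion by a route genuinely different from the paper's, and its one substantive step is asserted rather than verified. The paper does not apply Cor.~\ref{09261423} as a black box here: its proof of Cor.~\ref{09262238} consists in re-running the proof of Cor.~\ref{09261423} with \cite[Cor. 2.6]{sil0} substituted for \cite[Thm. 2.2]{sil0}, the latter being the integrability-plus-estimate criterion used for a general $E$-appropriate $\mc{N}$ and the former evidently its analogue tailored to $\mc{N}_{st}(G)$ when $G$ is reflexive. Your plan instead is to certify that $\mc{N}_{st}(G)$ is an $E$-appropriate set with the isometric duality property and then invoke Cor.~\ref{09261423} verbatim --- which is precisely how the paper treats the Hilbert case in Cor.~\ref{09262234}, where that certification is explicitly available at the end of \cite[Rmk. 2.12]{sil0}. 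For the reflexive case you only claim that the analogous facts ``sit alongside'' those for $\mc{N}_{pd}(G)$ in \cite{sil0}, with no precise citation; since locating or establishing exactly these facts is the entire content of the corollary, this is where your argument is incomplete. In particular, your justification of the isometric duality property --- that reflexivity upgrades $\mc{N}_{st}(G)$ to an isometric predual of $B(G)$ --- rests on the identification of $B(G)$ with the dual of $G\widehat{\otimes}_{\pi}G'$ for reflexive $G$, a nontrivial point you neither prove nor reference. If the needed properties of $\mc{N}_{st}(G)$ are indeed recorded in \cite{sil0} your route closes and is arguably tidier; the paper's substitution of \cite[Cor. 2.6]{sil0} is the more economical path, since it requires only that single corollary rather than the full package of $E$-appropriateness together with isometric duality.
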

\begin{proof}
By employing \cite[Cor. 2.6]{sil0} instead of \cite[Thm. 2.2]{sil0} 
the proof runs exactly as the one in Cor. \ref{09261423}.
\end{proof}
%%%%%%%%%%%%%%%%%%%%%%%%%%%%%%%%%%%
%%%%%%%%%%%%%%%%%%%%%%%%%%%%%%%%%%%
%%%%%%%%%%%%%%%%%%%%%%%%%%%%%%%%%%%
%%%%%%%%%%%%%%%%%%%%%%%%%%%%%%%%%%%
%%%%%%%%%%%%%%%%%%%%%%%%%%%%%%%%%%%%%%%%%%%%%%%%%%%%%
%%%%%%%%%%%%%%%%%%%%%%%%%%%%%%%%%%%%%%%%%%%%%%%%%%%%%
%%%%%%%%%%%%%%%%%%%%%%%%%%%%%%%%%%%%%%%%%%%%%%%%%%%%%

%%%%%%%%%%%%%%%%%%%%%%%%%%%%%%%%%%%%%%%%%%%%%%%%%%%%%
%%%%%%%%%%%%%%%%%%%%%%%%%%%%%%%%%%%%%%%%%%%%%%%%%%%%%
%%%%%%%%%%%%%%%%%%%%%%%%%%%%%%%%%%%%%%%%%%%%%%%%%%%%%
\end{document}